\newcommand{\ov}[1]{{\overline{#1}}}
\title{Verbal subgroups of hyperbolic groups have infinite width}
\author{Alexei Myasnikov}
\address{Department of Mathematical Sciences\\Stevens Institute of Technology}
\curraddr{}
\email{amiasnikov@gmail.com}
\thanks{The first  author was partially supported by  NSF grants DMS-0914773 and DMS-1201550.}
\author{Andrey Nikolaev}
\address{Department of Mathematical Sciences\\Stevens Institute of Technology}
\curraddr{}
\email{andrey.nikolaev@gmail.com}
\newtheorem{cor}{Corollary}
\newtheorem{prop}{Proposition}
\newtheorem{theorem}{Theorem}
\newtheorem{lemma}{Lemma}
\newtheorem{problem}{Problem}
\begin{document}
\begin{abstract}
Let $G$ be a non-elementary  hyperbolic group. Let $w$ be a proper group word. We show that the width of the verbal subgroup  $w(G)=\langle w[G]\rangle$ is infinite. That is, there is no such  $l\in\mathbb Z$ that any $g\in w(G)$ can be represented as a product  of $\le l$ values of $w$ and their inverses.  As a consequence, we  obtain the same result for a wide class of relatively hyperbolic groups.
\end{abstract}

\maketitle

\section{Introduction}

In this paper we show that every non-elementary hyperbolic group $G$ is {\em  verbally parabolic}, i.e., every proper verbal subgroup of $G$ has infinite width.  This immediately implies that every group having a non-elementary hyperbolic image is also verbally parabolic, thus giving a wide class of verbally parabolic groups.

Let  $F(X)$ be a free group with  basis $X$ and  $W \subseteq F(X)$ a subset of $F(X)$. An element $g \in G$ is called a {\em $W$-element} if $g$ is the image in $G$ of some word $w \in W$ under a homomorphism $F(X) \to G$.  By $W[G]$ we denote the set of all $W$-elements in $G$ (the {\em verbal} set defined by $W$).  The set $W[G]$ generates the {\em verbal} subgroup $W(G)$. The verbal subgroups of groups were intensely studied in group theory especially with respect to relatively free groups and varieties of groups (see the book  \cite{Neumann} for details). If $W$ is finite then $W(G)$  is equal to $w(G)$ for a suitable single word $w$. From now on we always assume that $W$ is just a singleton $W = \{w\}$ and refer to the related verbal set and the subgroup  as  $w[G]$ and $w(G)$.

The {\em $w$-width} (or {\em $w$-length}) $l_w(g)$  of $g \in w(G)$ is  the least  natural number $n$ such that $g$ is a product of $n$   $w$-elements in $G$ or their inverses.  The {\em width} $l_w$ of $w$, as well as the {\em $w$-width} of the verbal subgroup $w(G)$, is defined by  $l_w = \sup\{l_w(g) \mid g \in w(G)\}$,  so it is either a natural number or $\infty$.

  Finite width of verbal subgroups plays an important part in several areas of groups theory:  finite groups, profinite and algebraic groups, polycyclic and abelian-by-nilpotent groups.

 In finite groups one of the earliest  questions on verbal width goes back to the Ore's paper \cite{Ore} where he asked if   the commutator length (the $[x,y]$-length) of every element in a non-abelian finite simple group is equal to 1 ({\em Ore Conjecture}). Only recently the conjecture was established by  Liebeck, O'Brian, Shalev and  Tiep \cite{LBST}. For other  recent spectacular results on verbal width in finite simple groups we refer to the papers \cite{LS}, \cite{Shalev}.

In profinite, as well as algebraic,  groups the interest to verbal width comes from a fact that verbal subgroups of finite width are closed in profinite, correspondingly Zariski,  topology. This shows up in   the proof of  Serre's result  \cite{Serre}: if $G$ is a finitely generated pro-p group then every subgroup of finite index is open.  Serre raised the question if the above result holds for arbitrary finitely generated profinite groups. In 2007 this conjecture was settled in affirmative by Nikolov and Segal \cite{NS}. Their proof is based in large on establishing uniform bounds on verbal width in finite groups. We refer to the book \cite{Segal} on verbal width in profinite and algebraic groups  and to a recent survey \cite{Nikolov2012}.

The study of verbal sets and subgroups in infinite groups was initiated by P. Hall \cite{H1,H2}. In 1960's several fundamental results on verbal width in infinite groups  were obtained in his school: Stroud proved (unpublished) that all finitely generated abelian-by-nilpotent groups $G$ are {\em verbally elliptic}, i.e., every verbal subgroup $w(G)$ of $G$ has finite $w$-width; meanwhile Rhemtulla showed in \cite{Rhe} that every not infinite dihedral free product $G = A \ast B$ of non-trivial groups $A$ and $B$ is verbally parabolic.  We refer again to the book \cite{Segal} for the history and detailed modern proofs.

Shortly afterwards, Hall's approach to verbal and marginal subgroups gave rise to an interesting  research in Malcev's algebraic school in Russia.  Firstly, Merzlyakov \cite{Mer} showed that linear algebraic groups are verbally elliptic, and then Romankov \cite{Romankov} (independently of Stroud and using different means) showed that finitely generated virtually  nilpotent  groups are all verbally elliptic; as well as  all virtually polycyclic groups (again, we refer to \cite{Segal} for concise proofs, generalizations and related new developments). Since Malcev's school was keen on model-theoretic problems in groups, it was  quickly recognized there that verbal subgroups of finite width are first-order definable in the group, so they provide a powerful tool in the study of elementary theories of verbally elliptic groups, in particular, nilpotent groups. Since then the elementary theories of finitely generated nilpotent groups and groups of $K$-points ($K$ is a field) of algebraic nilpotent groups were extensively studied  \cite{Mal,Ersh,MR1,MR2,MR3,M1,M2,M3}. Nowadays, it is known precisely which finitely generated nilpotent groups are elementarily equivalent \cite{Oger}; or what are arbitrary groups (perhaps, non-finitely generated) which are  elementarily equivalent to a given unipotent $K$-group \cite{M1,M2}, or a given unitriangular group $UT(n,\mathbb{Z})$ \cite{Bel}, or a  given finitely generated free nilpotent group  \cite{MS}.

In the opposite direction, Rhemtulla's remarkable proof of verbal parabolicity of non-abelian free groups and non-trivial free products gave rise to several interesting generalizations.  In \cite{Gr} Grigorchuk (see also Fujiwara~\cite{Fuji}) studied bounded cohomologies of group constructions, and proved  that verbal width of the commutator subgroup in a wide class of amalgamated free products and HNN extensions is infinite.  Bardakov showed that HNN extensions with proper associated subgroups and  one-relator groups with at least three generators are verbally parabolic \cite{Bar1}; as well as  braid groups $B_n$ \cite{Bar2}.  For amalgamated free products Dobrynina improved on the previous partial results by showing that $A\ast_U B$ is verbally parabolic, provided $U \neq A$ and $UbU \neq Ub^{-1}U$ for some $b \in B$ \cite{Dob}.

In this paper we show that many groups that have some kind of ``negative curvature'' are virtually parabolic.  The main result in its pure form (Theorem \ref{width}) states that  every non-elementary (i.e., non virtually cyclic)  hyperbolic group $G$ is verbally parabolic. As was mentioned above this  implies that every group that have a non-elementary  hyperbolic image is also verbally parabolic.

In particular, the following groups are verbally parabolic:  non-abelian  residually free groups, pure braid groups,  non-abelian right angled Artin groups --- all of them have free non-abelian quotients (so these are corollaries of the original Remtulla's result on free groups).
Less obvious examples include many relatively hyperbolic groups, e.g., non-elementary groups hyperbolic relative to proper residually finite subgroups \cite{Osi07}. Thus, the fundamental groups of complete finite volume manifolds of pinched negative curvature, $CAT(0)$ groups with isolated flats, groups acting freely on $\mathbb R^n$--trees are verbally parabolic.

The results above show that in the presence of negative curvature the verbal width is not a very sensitive characteristic of verbal subgroups. In this case  it seems more  convenient to consider a more smooth {\em stable } $w$-length,  which is defined for an element $g \in w(G)$ as the limit $\lim_{n\to \infty}\frac{l_w(g^n)}{n}$.
It is known that the stable commutator length relates to  an $L^1$  filling norm with rational coefficients, introduced by Gromov in \cite{Gr1}, see also Gersten's paper \cite{Ger}. In \cite{Gr2} Gromov studied stable commutator length and its relation with  bounded cohomology. We refer to a book \cite{Cal1}  by Calegary  on stable length of elements in groups.
 Interestingly,  Calegari showed in \cite{Cal2} that if a group $G$ satisfies a non-trivial law then the stable commutator length is equal to 0 for every element from $[G,G]$. So it seems the verbal width and stable length operate nicely in very different classes of groups.

 We formulate a few  open problems on verbal width in infinite groups.
 The first one, if true, would make proofs of many results easier.
 \begin{problem}
 Is it true that finite extensions of verbally parabolic groups are verbally parabolic?
 \end{problem}
 Notice that in general,  verbal parabolicity is not a geometric property. Indeed, there is a group $H$ with finite commutator width (the commutator width is equal to 1 in $H$), but some of its  finite extensions has infinite commutator width \cite[Exercise 3.2.2]{Segal}, so infinite width is not preserved by quasi-isometries.
The second problem  is an attempt to clarify if interesting groups without free subgroups can be verbally parabolic. It seems all known verbally parabolic groups have free non-abelian subgroups.
\begin{problem}
For which group words $w$ is $w$-width of Thompson group $F$ finite? Same question about Grigorchuk's group.
\end{problem}

We are grateful to Y.~Matsuda for his observation that, since the commutator subgroup of Thompson group $F$ is uniformly perfect~\cite{CFP}, the group $F$ has finite commutator width. It was shown in \cite{LMU} that the Grigorchuk group $\Gamma$ also has finite commutator width. The question remains open for other words.

In general, verbal sets $w[G]$ contain a lot of information about the group $G$. For example, the Membership Problem (MP) to the set $w[G]$ in $G$ is equivalent of solving in $G$ the homogeneous equations of the type $w(X) = g$, where  $g \in G$. Notice, that the Diophantine Problem (of solvability of arbitrary equations) is decidable in a free group $F$ \cite{Mak}, so the verbal sets are decidable.  However,   the actual algorithmic or formal language theoretic  complexity of verbal sets in $F$ is unknown.

\begin{problem}
Is it true that proper verbal subsets of a non-elementary hyperbolic group are non-rational?
\end{problem}
It is known that proper verbal set in free non-abelian groups are non-rational \cite{MRom}. Rhemtulla's technique from \cite{Rhe} played an important part in the proof of this result.

\section{Preliminaries}
\label{se:pre}

\subsection{Verbal sets and subgroups}
Let $F = F(X)$ be a free group with basis $X = \{x_1, \ldots,x_k, \ldots \}$, viewed as the set of reduced words in $X \cup X^{-1}$ with the standard multiplication. Throughout we use notation and definitions from Introduction.

By $w[G]^l$ we denote the set of elements $g\in G$ for which there exist $g_{ij}\in G$ such that
\[g=w^{\pm1}(g_{11},g_{12},\ldots)\cdot w^{\pm1}(g_{21},g_{22},\ldots)\cdots w^{\pm1}(g_{l1},g_{l2},\ldots).\]

A word $w = w(x_1, x_2, ..., x_n) \in F$ is termed  {\em proper} if there exist groups $G$ and $H$ such that $w[G] \neq  1$ and  $w[H] \neq H$. Notice that in such case $1 \neq w[G \times H]  \neq G\times H$, so the same group $G\times H$ satisfies both inequalities.

Any element $w  \in F$ can be written as  a product
\[w = x_1^{t_1}x_2^{t_2} ... x_n^{t_n} w',\]
 where $w'  \in [F,F]$, $t_1, \ldots, t_n \in \mathbb{Z}$. Since the exponents $t_1,  \ldots, t_n$ depend only on the element $w$, the number $e(w) = gcd(t_1, \ldots, t_n)$ is well-defined (here we put $e(w) = 0$ if $t_1 = \ldots = t_n = 0$).  If $e(w) = 0$ then we refer to $w$ as a {\em commutator word}.  A non-trivial commutator word is obviously proper (for instance, it is proper in $F$). If $e(w) > 0$ then  there exist integers $r_1, r_2, ..., r_n $ such that  $\sum_{i=1}^{n}r_it_i = e(w),$ so for an arbitrary group $G$ and an element $g \in G$ one has $w(g^{r_1}, g^{r_2}, ..., g^{r_n}) = g^{e(w)}.$ In particular, $e(w) = 1$ implies that  $w[G] = G$ for every group $G,$ so $w$ is not proper.  If $e(w) > 1$ then  $w$ is  proper, which can be seen in an infinite cyclic group. The argument above shows that a non-trivial  word  $w \in F $ is proper if and only if $e(w) \neq 1$.
The following is the main result of the paper.
\begin{theorem}\label{width}
Every non-elementary hyperbolic group $G$ is verbally parabolic, i.e., $w$-width of the verbal subgroup $w(G)$ is  infinite for each proper word $w$. % in a free group.
\end{theorem}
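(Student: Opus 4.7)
I argue by contradiction. Suppose $l:=l_w<\infty$, so every $g\in w(G)$ decomposes as a product of at most $l$ values of $w^{\pm 1}$; I will exhibit elements of $w(G)$ whose $w$-length grows without bound. Throughout, I use the standard non-elementary-hyperbolic setup: $G$ contains loxodromic elements $a,b$ with disjoint fixed-point pairs on $\partial G$, and after replacing them by sufficiently high powers one may assume $\langle a,b\rangle\cong F_2$ is quasi-convex in $G$ with translation lengths $\tau(a),\tau(b)$ as large as desired. Following the dichotomy at the end of Section~\ref{se:pre}, the argument splits into the two cases $e(w)=0$ and $e(w)=e>1$.

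\emph{Case $e(w)=0$.} Fix an expression of $w$ as a product of $k$ commutators in $F(X)$. Every $v\in w[G]$ is then a product of $k$ commutators in $G$, so any $g=v_1\cdots v_s$ with $v_i\in w[G]^{\pm 1}$ satisfies $\mathrm{cl}(g)\le sk$, giving the pointwise bound
\[l_w(g)\;\ge\;\tfrac{1}{k}\,\mathrm{cl}(g)\qquad(g\in w(G)).\]
By the Epstein--Fujiwara construction of infinitely many independent homogeneous quasi-morphisms together with Bavard duality, non-elementary hyperbolic groups admit elements of positive stable commutator length. Choose a $w$-value $u\in w[G]$ that is loxodromic in $G$ with $\mathrm{scl}_G(u)>0$ --- obtainable by specializing the free variables of $w$ to a generic tuple in $\langle a,b\rangle$. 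Then $u^n\in w(G)$ satisfies $\mathrm{cl}(u^n)\to\infty$, so $l_w(u^n)\to\infty$, contradicting $l<\infty$.

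\emph{Case $e(w)>1$.} Commutator length is no longer an obstruction here, and the argument must be genuinely geometric. Write $w=x_1^{t_1}\cdots x_n^{t_n}w'$ with $\gcd(t_i)=e$; picking $r_1,\dots,r_n$ with $\sum t_ir_i=e$ yields $w(g^{r_1},\dots,g^{r_n})=g^e$ for all $g\in G$, so every $e$-th power is a single $w$-value. I propose the witnesses
\[g_m\;=\;a^{e}\,b^{e}\,a^{e}\,b^{e}\,\cdots\qquad(m\ \text{syllables}),\]
which plainly lie in $w(G)$. A hypothetical decomposition $g_m=v_1\cdots v_s$ with $v_i\in w[G]^{\pm 1}$ and $s\le l$ forces long subsegments of the (alternating) $G$-geodesic representing $g_m$ to fellow-travel with subsegments of geodesic representatives of the $v_i$. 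Using $\delta$-hyperbolicity, the large translation lengths of $a,b$ and the quasi-convex free structure of $\langle a,b\rangle$, in the style of Olshanskii's small-cancellation-in-hyperbolic-groups toolkit, one bounds the number of $\{a,b\}$-syllables that each $v_i$ can absorb by a constant depending only on $w$ and $\delta$. This forces $s\ge\Omega(m)$ and contradicts $s\le l$ for $m$ large.

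The main obstacle is the second case. The commutator case is almost free once stable commutator length is invoked, but for $e(w)>1$ one must rule out subtle ``shortcuts'' in which a single $w$-value snakes non-geodesically between the axes of $a$ and $b$ and swallows many syllables of $g_m$ in one factor. Establishing the linear lower bound on $s$ demands a careful quasi-geodesic/small-cancellation analysis that genuinely exploits $\delta$-hyperbolicity rather than merely the presence of a free subgroup; this is also the step whose proof transfers to relatively hyperbolic groups and thereby yields the consequence advertised in the abstract.
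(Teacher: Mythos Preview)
Your Case~2 witnesses fail outright. With $g_m=a^{e}b^{e}a^{e}b^{e}\cdots$ ($m$ syllables) and $m=2je$ we have $g_{2je}=(a^{e}b^{e})^{je}=\bigl((a^{e}b^{e})^{j}\bigr)^{e}$, a single $e$-th power and hence a single $w$-value; thus $l_w(g_{2je})=1$ along this whole subsequence, and no fellow-traveling argument can produce a linear lower bound. The periodicity of your word is exactly the ``shortcut'' you warn about in your last paragraph. The paper circumvents this by using \emph{three} pairwise non-commensurable elements $b,f_0,f_1$ and a Rhemtulla-type invariant: to each big-powers product it attaches, for every $\omega\in\mathbb Z$, the signed count $\Delta_\omega$ of $b$-syllables whose $f_0$-exponent sum equals $\omega$, and sets $\gamma(g)=\#\{\omega:\Delta_\omega\not\equiv 0\ (\mathrm{mod}\ d)\}$. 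A thin-polygon/fellow-travel analysis (your ``Olshanskii toolkit'') shows $\gamma$ is bounded on $w[G]^l$ for each fixed $l$; the witnesses are $h_k=X_1^{d}X_2^{d}\cdots X_k^{d}$ with the $X_j$ staggered so that their $b$-gaps are all distinct, forcing $\gamma(h_k)\to\infty$. It is this mod-$d$ bookkeeping, not a raw syllable count, that separates $w[G]^l$ from $w(G)$.

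Your Case~1 is a genuinely different route from the paper (which handles both cases with the same $\gamma$-invariant), and the inequality $l_w(g)\ge \mathrm{cl}(g)/k$ is correct. The gap is the parenthetical ``obtainable by specializing \ldots\ in $\langle a,b\rangle$'': specialization in the free subgroup gives $\mathrm{scl}_{\langle a,b\rangle}(u)>0$ (Duncan--Howie), but $\mathrm{scl}$ is monotone \emph{non-increasing} under inclusion, so this is the wrong inequality for $\mathrm{scl}_G(u)>0$. To repair this you must actually run the Epstein--Fujiwara construction in $G$: choose $u\in w[\langle a,b\rangle]$ loxodromic with no positive power conjugate in $G$ to a negative power (arrangeable since $E(u)$ is virtually cyclic and $u$ can be taken with $E(u)=\langle u\rangle$), and then the associated counting quasi-morphism gives $\mathrm{scl}_G(u)>0$ via Bavard duality. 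That is a real step, not a remark, and once it is done your Case~1 does go through---but Case~2 still needs the combinatorial invariant above.
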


\subsection{Hyperbolic geometry}
\label{se:hyperbolic-geometry}

In this section we recall some known facts about hyperbolic metric spaces (see \cite{Gr3,Eps,BH} for general references).

Let $\mathcal M$ be a metric space with distance $\mathop{\mathrm{dist}}(x,y)$ which we also denote by $|x-y|$. Given a path $p:I\to\mathcal M$, where $I\subseteq [0,\infty)$ is an interval, we can assume that $p$ is extended to a path $[0,\infty)\to\mathcal M$ by staying at endpoints. Fix a constant $K \in \mathbb{N}$.
Recall that two paths $p$ and $q$ in $\mathcal M$  {\em $K$-fellow travel} if $\mathop{\mathrm{dist}}(p(t), q(t))\le K$ for all $t\ge 0$ \cite{Eps}. They
{\em asynchronously $K$-fellow travel} if there are non-decreasing proper\footnote{A continuous map is {\em proper} if the preimage of every compact set is compact. As applied to a non-decreasing function $[0,\infty)\to[0,\infty)$, it means that the image is not bounded.} continuous functions
$\varphi, \psi : [0,\infty)\to [0,\infty)$ such that $\mathop{\mathrm{dist}}(p(\varphi(t)),q(\psi(t)))\le  K$ for all $t\ge 0$.

For a subset $Y \subseteq \mathcal M$ and a number $H \in \mathbb{N}$  by $\mathcal N_H(Y)$ we denote the closed $H$-neighborhood of $Y$. As usual, two subsets $Y_1$ and $Y_2$ of $\mathcal M$  are {\em $K$-close} if the Hausdorff distance between them does not exceed $K$, i.e.,
$Y_1\subseteq \mathcal N_K(Y_2)$ and $Y_2\subseteq \mathcal N_K(Y_1)$.

Let $I$ be an interval $I\subseteq [0,\infty)$. A path $p:I\to \mathcal M$ in a metric space is called $(\lambda,\varepsilon)$-quasigeodesic if
$$
\frac{1}{\lambda}\,|t-t'|-\varepsilon\le \mathop{\mathrm{dist}}(p(t),p(t'))\le \lambda \,|t-t'|+\varepsilon
$$
for all $t,t'\in I$.

Evidently, if paths $p$, $q$ asynchronously $K$-fellow travel then  they are $K$-close. The converse is true for quasigeodesic paths.

\begin{lemma}\label{fellow-equiv} Suppose $p, q$ are $K$-close $(\lambda,\varepsilon)$-quasigeodesics in a geodesic metric space $\mathcal H$ that originate from points $p_0,q_0$, respectively, such that $|p_0-q_0|\le K$, and terminate at points $p_1,q_1$, respectively, such that $|p_1-q_1|\le K$.
Then $p$ and $q$ asynchronously $K'$-fellow travel for some constant $K'=K'(\lambda, \varepsilon, K)$.
\end{lemma}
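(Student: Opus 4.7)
The plan is to reduce the statement to the analogous one for two genuine geodesics via the Morse stability of quasi-geodesics, synchronously fellow-travel those geodesics, and then lift this fellow traveling back to $p$ and $q$.

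First, I would apply the Morse lemma: in a $\delta$-hyperbolic geodesic space every $(\lambda,\varepsilon)$-quasi-geodesic lies within Hausdorff distance $R=R(\delta,\lambda,\varepsilon)$ of a genuine geodesic joining its endpoints. Applied to $p$ and $q$, this yields geodesics $\gamma_p$, $\gamma_q$ that are $R$-close to $p$ and $q$ respectively. A standard ``running max'' regularization of the nearest-point projection then encodes this identification as non-decreasing, continuous, proper reparametrizations $\alpha_p$, $\alpha_q$ satisfying $|p(t)-\gamma_p(\alpha_p(t))|\le R'$ and $|q(s)-\gamma_q(\alpha_q(s))|\le R'$ for a slightly larger $R'=R'(\delta,\lambda,\varepsilon)$.

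Second, the $K$-closeness of $p$ and $q$ combined with the Morse bound gives $\gamma_p$ and $\gamma_q$ Hausdorff-close at distance $K+2R$. Together with $|\gamma_p(0)-\gamma_q(0)|=|p_0-q_0|\le K$, a purely metric triangle-inequality argument---using that a geodesic realizes the distance between its endpoints and hence cannot double back---forces the remaining pair of endpoints of $\gamma_p$ and $\gamma_q$ also to be close, at distance bounded by a constant $D=D(K,R)$; in the case when $p$ or $q$ is an infinite ray, this step is automatic because the corresponding $\gamma$'s must then share an endpoint at infinity.

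Third, in any $\delta$-hyperbolic space, two geodesic segments with both pairs of endpoints $D$-close synchronously fellow travel under proportional reparametrization at distance at most $D+O(\delta)$, by the standard thin-quadrilateral argument. Composing this proportional reparametrization of $\gamma_p$, $\gamma_q$ with the Morse reparametrizations $\alpha_p$, $\alpha_q$---and extending by constants past the natural domain in the finite-length case---produces non-decreasing continuous proper functions $\varphi,\psi:[0,\infty)\to[0,\infty)$ with $|p(\varphi(t))-q(\psi(t))|\le 2R'+D+O(\delta)$, every constant depending only on $\delta$, $\lambda$, $\varepsilon$, and $K$, which is the desired $K'$.

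The main obstacle I expect is the first step: turning the Morse statement, which is phrased as a Hausdorff identification, into an honest \emph{monotonic}, \emph{continuous}, \emph{proper} reparametrization of $p$ along $\gamma_p$. The raw nearest-point projection need not enjoy any of these properties, but its deviations from monotonicity and continuity are bounded in terms of $\delta$, $\lambda$, $\varepsilon$, and a careful but routine regularization absorbs them into the constant $R'$.
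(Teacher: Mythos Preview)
Your argument is essentially correct, but it takes a genuinely different route from the paper.

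The paper's proof is a two-line reduction: it observes that any $(\lambda,\varepsilon)$-quasigeodesic admits the departure function $D(r)=\lambda r+\lambda\varepsilon$ (i.e.\ points at parameter distance $\ge D(r)$ are at metric distance $\ge r$), and then invokes Lemma~7.2.9 of Epstein et al., which says directly that $K$-close paths sharing a departure function asynchronously $K'$-fellow travel. Note that hyperbolicity plays no role in this argument; the resulting $K'$ depends only on $\lambda,\varepsilon,K$, not on $\delta$.

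Your approach instead passes through the Morse lemma to replace $p,q$ by geodesics $\gamma_p,\gamma_q$, uses thin quadrilaterals to synchronously fellow-travel those, and then lifts back via regularized nearest-point reparametrizations. This is sound, and the obstacle you flag (upgrading the Hausdorff identification from Morse to a monotone, continuous, proper reparametrization) is real but manageable---it is essentially the same ``departure function'' idea in disguise, since the quasigeodesic inequality is exactly what bounds the backtracking of the raw projection. What your route buys is a self-contained geometric picture; what it costs is the extra regularization work and an unnecessary dependence of $K'$ on $\delta$. The paper's citation to Epstein et al.\ sidesteps all of this by treating the problem as a purely combinatorial statement about paths that do not linger, with no appeal to hyperbolicity at all.
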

%\begin{proof} %Well known.
The above lemma seems to be well-known, but we were not able to find the original proof. The statement is essentially proved (albeit in different terms) in~\cite[Lemma~7.2.9]{Eps}, but for the sake completeness we provide a proof in the Appendix (Section~\ref{se:async}).
%It is shown in the proof of Lemma~7.2.9 of~\cite{Eps} that the required statement holds in presence of
%so-called departure function $D$ for $p,q$. Recall that
%for a path $\alpha:[0,T]\to \mathcal H$, function $D:\mathbb R\to\mathbb R$ is called a departure function if $|\alpha(s)-\alpha(s+t)|\ge r$ whenever $s,t+s\in [0,T]$ and $t\ge D(r)$. Note that in
%case of $(\lambda,\varepsilon)$-quasigeodesics, $D(r)=\lambda r+\lambda\varepsilon$ is a departure function.
%\end{proof}

\begin{lemma}\label{fellow-0}
Let $\mathcal H$ be a $\delta$-hyperbolic geodesic metric space. Let $p$ be a geodesic path, and $q$ be a $(\lambda,\varepsilon)$-quasigeodesic
path in $\mathcal H$ joining points $P,Q$ and $P,S$, respectively. Suppose $H\ge 0$ is such that $|Q-S|\le H$. Then there exists $K=K(\delta,\lambda,\varepsilon,H)\ge 0$ such that $p,q$ asynchronously $K$-fellow travel.
\end{lemma}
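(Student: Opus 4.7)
The plan is to reduce this to Lemma~\ref{fellow-equiv}. Since $p$ shares its initial point $P$ with $q$, the hypothesis $|p_0-q_0|\le K$ of Lemma~\ref{fellow-equiv} is automatic (with $p_0=q_0=P$). A geodesic is a $(1,0)$-quasigeodesic, hence trivially a $(\lambda,\varepsilon)$-quasigeodesic for any $\lambda\ge 1$, $\varepsilon\ge 0$. Thus it suffices to produce a constant $K_1=K_1(\delta,\lambda,\varepsilon,H)$ such that the Hausdorff distance between $p$ and $q$ is at most $K_1$; then Lemma~\ref{fellow-equiv} delivers the asynchronous fellow-traveling constant $K'$ depending only on $\delta,\lambda,\varepsilon,H$.

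To establish Hausdorff closeness of $p$ and $q$, I would introduce an auxiliary geodesic $\tilde p$ from $P$ to $S$ and use it as a bridge. First, by the Morse/Gromov stability of quasigeodesics in $\delta$-hyperbolic spaces, there is a constant $M=M(\delta,\lambda,\varepsilon)$ such that the $(\lambda,\varepsilon)$-quasigeodesic $q$ and the geodesic $\tilde p$, which share both endpoints $P$ and $S$, are $M$-close in Hausdorff distance. Second, $p$ and $\tilde p$ are two geodesics with common initial point $P$ whose terminal points $Q$ and $S$ satisfy $|QS|\le H$; a standard thin-triangle argument (apply $\delta$-thinness to the triangle with vertices $P,Q,S$, together with the fact that on a geodesic the distance to a point is $1$-Lipschitz in arclength) yields that $p$ and $\tilde p$ are $(H+2\delta)$-close in Hausdorff distance. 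Combining the two estimates via the triangle inequality for Hausdorff distance gives $p$ and $q$ at Hausdorff distance at most $K_1:=M+H+2\delta$, as required.

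The step I expect to be most delicate is verifying both directions of Hausdorff closeness for $p$ and $\tilde p$; closeness of $p$ to a sub-arc of $\tilde p$ is automatic from thin triangles, but one also needs that every point of $\tilde p$ lies near $p$. This is handled by observing that any point of $\tilde p$ beyond the projection of $Q$ onto $\tilde p$ lies within $H$ of the endpoint $Q\in p$, while points of $\tilde p$ before that projection are $\delta$-close to $p$ by the usual thin-triangle estimate. Once these bookkeeping issues are dispatched, everything else is an immediate application of Lemma~\ref{fellow-equiv} with the joint quasigeodesic parameters $(\max(\lambda,1),\varepsilon)$ and the closeness constant $K_1$, producing $K=K'(\delta,\max(\lambda,1),\varepsilon,K_1)$, which is a function of $\delta,\lambda,\varepsilon,H$ alone.
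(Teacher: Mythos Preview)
Your argument is correct and follows essentially the same route as the paper: establish that $p$ and $q$ are Hausdorff $K_1$-close for some $K_1=K_1(\delta,\lambda,\varepsilon,H)$ and then invoke Lemma~\ref{fellow-equiv}. The only cosmetic difference is that the paper obtains closeness in one stroke by absorbing $H$ into the additive quasigeodesic constant (citing the Morse stability result with parameter $\varepsilon+H$), whereas you factor through the auxiliary geodesic $\tilde p=[P,S]$ and combine Morse stability for $q,\tilde p$ with a thin-triangle estimate for $p,\tilde p$; both yield the same conclusion.
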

\begin{proof}
It is  known (see, for example,~\cite[Section 7.2]{Gr3} or~\cite[Theorem III.1.7]{BH})  that such paths $p,q$ are $K$-close for some $K=K(\delta, \lambda, \varepsilon+H)$. Now, the asynchronous $K'$-fellow
travel property follows from Lemma~\ref{fellow-equiv}.
\end{proof}

\begin{lemma}\label{fellow}
Let $\mathcal H$ be a $\delta$-hyperbolic geodesic metric space. Let $p,q$ be two $(\lambda,\varepsilon)$-quasigeodesic
paths in $\mathcal H$ joining points $P_1,P_2$ and $Q_1,Q_2$, respectively. Suppose $H\ge 0$ is such that $|P_1-Q_1|\le H$ and
$|P_2-Q_2|\le H$. Then there exists $K=K(\delta,\lambda,\varepsilon,H)\ge 0$ such that $p,q$ asynchronously $K$-fellow travel.
\end{lemma}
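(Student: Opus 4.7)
The plan is to bridge $p$ and $q$ through auxiliary geodesics and then invoke Lemmas~\ref{fellow-0} and~\ref{fellow-equiv}. Fix a geodesic $r_1$ joining $P_1$ to $P_2$ and a geodesic $r_2$ joining $Q_1$ to $Q_2$. Since $p$ and $r_1$ share the same endpoints, Lemma~\ref{fellow-0} (applied with $H=0$) gives a constant $K_1=K_1(\delta,\lambda,\varepsilon)$ such that $p$ and $r_1$ asynchronously $K_1$-fellow travel; in particular they are $K_1$-close in Hausdorff distance. The same argument applied to $q$ and $r_2$ shows that $q$ and $r_2$ are $K_1$-close.

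Next I would compare the two reference geodesics $r_1$ and $r_2$ directly. Their endpoints satisfy $|P_1Q_1|\le H$ and $|P_2Q_2|\le H$, so joining $P_1$ to $Q_1$ and $P_2$ to $Q_2$ by geodesic segments of length at most $H$ produces a geodesic quadrilateral with two sides of length at most $H$. The standard thin-quadrilateral property of $\delta$-hyperbolic spaces (each side lies in the $2\delta$-neighborhood of the union of the other three) then yields a constant $K_2=K_2(\delta,H)$ such that $r_1$ and $r_2$ are $K_2$-close.

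Combining the three Hausdorff bounds via the triangle inequality, $p$ and $q$ are $K''$-close with $K''=2K_1+K_2$, a constant depending only on $\delta,\lambda,\varepsilon,H$. Setting $K_0=\max(K'',H)$ ensures both the Hausdorff closeness and the starting-point inequality $|P_1-Q_1|\le K_0$ required by Lemma~\ref{fellow-equiv}; that lemma, applied to the $(\lambda,\varepsilon)$-quasigeodesics $p$ and $q$, delivers a constant $K=K(\delta,\lambda,\varepsilon,K_0)=K(\delta,\lambda,\varepsilon,H)$ such that $p$ and $q$ asynchronously $K$-fellow travel, as claimed. There is no real obstacle in this scheme: every quantitative step is a citation or a direct triangle-inequality bookkeeping, and the only point demanding genuine hyperbolic geometry is the quadrilateral estimate in the second step, which is entirely standard.
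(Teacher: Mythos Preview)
Your proof is correct and follows the same overall strategy as the paper: establish Hausdorff closeness of $p$ and $q$ via auxiliary geodesics, then feed this into Lemma~\ref{fellow-equiv}. The only difference is in the choice of auxiliary path. You draw two geodesics $r_1=[P_1,P_2]$ and $r_2=[Q_1,Q_2]$ and then need an extra quadrilateral estimate to compare them; the paper instead draws a single diagonal geodesic $s=[P_1,Q_2]$ and applies Lemma~\ref{fellow-0} twice (once to the pair $p,s$, once to $s,q$), which already absorbs the $H$-displacement of endpoints and makes the separate thin-quadrilateral step unnecessary. Both routes are short and standard; the paper's is marginally more economical in that it stays entirely within Lemmas~\ref{fellow-0} and~\ref{fellow-equiv}.
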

\begin{proof} Join points $P_1$ and $Q_2$ with a geodesic $s$. Then by Lemma~\ref{fellow-0} $p,s$ and $s,q$ asynchronously $K$-fellow travel for some $K$, so they are $K$-close. Hence $p$ and $q$ are $2K$-close, and the result follows from Lemma \ref{fellow-equiv}.
\end{proof}

\subsection{Canonical representatives}\label{sub:reps}
In Section~\ref{sec:gaps} we use so-called canonical representatives for elements of hyperbolic groups. We follow the work of Dahmani and Guirardel~\cite{DG}, where they generalized Rips--Sela representatives for elements of torsion-free hyperbolic groups (see~\cite{RS}) to the case of arbitrary hyperbolic groups.
%The main apparent difference is that in the latter case, canonical representatives are paths in a certain ``refinement'' of the Cayley graph, rather than paths in the Cayley graph itself.
Below we give a brief account of the terminology and results essential to our present work, and we refer the reader to~\cite[Section 9]{DG} for details.

Let $\mathcal C=\mathcal C(G,A)$ be the Cayley graph of $G$ relative to the generating set $A$. Let $G$ be $\delta$-hyperbolic with respect to $A$ (assume without loss of generality $\delta\ge 1$). Let $V(\mathcal C)$ be the vertex set of the graph $\mathcal C$.
%for an arbitrary path $p$ in the Cayley graph of $G$ as follows.
%Let $\Gamma$ be a $1$-skeleton of barycentric subdivision of
Consider so-called Rips complex $P_{50\delta}(\mathcal C)$ whose set of vertices is $V(\mathcal C)$, and whose simplices are subsets of $V(\mathcal C)$ of diameter at most $50\delta$ in the metric of $\mathcal C$ (see, for example,~\cite[Chapter III.$\Gamma$.3]{BH} for details regarding Rips complex). Let $\mathcal K$ denote the $1$-skeleton of its barycentric subdivision\footnote{
The {\em barycentric subdivision} of an $n$ vertex simplex $S$ consists of $n!$ simplices. Each of these simplices, with vertices $v_1,\ldots v_n$, corresponds to a permutation $p_1,\ldots,p_n$ of the vertices of $S$ so that $v_j$ is the barycenter of $p_1,\ldots,p_j$, $j=1,\ldots,n$.
%For example, the barycentric subdivision of a triangle consists of $6$ triangles.
}.
Note that there is an obvious embedding $\iota: V(\mathcal C)\to V(\mathcal K)$. We extend this map to paths in $\mathcal C$ by setting $\iota(v_1\overset{e}{\to}v_2)=\iota(v_1)\to \mathop{\mathrm{bary}}(e)\to \iota(v_2)$ for every edge $e$ and the corresponding barycenter $\mathop{\mathrm{bary}}(e)$ in $\mathcal K$. % To preserve distances in $\mathcal C$ under this map, we assign length $1/2$ to each edge of $\mathcal K$, so each edge in $\mathcal C$ gets mapped to a path of length $1$ in $\mathcal K$.

A path in a graph is called reduced if it has no backtracking subpaths consisting of two edges. Note that given a path, there is a unique corresponding reduced path obtained by removing all backtracks.
%For a reduced path $p$ in $\mathcal C$, by $\ov{p}\in G$ we denote the group element corresponding to the label of the path $p$.
For a reduced path $q$ in $\mathcal K$ originating at $v\in\iota(V(\mathcal C))$ and terminating at $v'\in\iota(V(\mathcal C))$, by $\ov{q}$ we denote the group element $g^{-1}g'$, where $g,g'\in G$ correspond to $v$, $v'$, respectively. The set of reduced paths in $\mathcal K$ that originate at $\iota(1)$ and terminate in $\iota(V(\mathcal C))$ is denoted by $\mathcal Q$. If $q, q'\in \mathcal Q$, by the product $qq'\in \mathcal Q$ we mean the reduced path that corresponds to the concatenation $q\cdot \ov{q}q'$, where $\ov{q}q'$ is the translate of $q'$ by $\ov{q}\in G$. $q^{-1}\in\mathcal Q$ is defined accordingly.

Recall that a system $\mathcal E$ of equations in a group $G$ over a set of variables $Y$ is called {\em triangular} if every equation $\varepsilon\in\mathcal E$ has the form $y_1y_2y_3=1$, $y_i\in Y\cup Y^{-1}$.  The following claim was proved in~\cite[Proposition 9.10]{DG}. Informally speaking, it allows one to represent a system of triangular equalities by tripods in $\mathcal K$, rather than by triangles in the Cayley graph $\mathcal C$ (see Fig.~\ref{fig:reps}).
\begin{prop}\label{prop:reps}\cite{DG} Let $G$ be a hyperbolic group. Let $\mathcal C$, $\mathcal K$, $\mathcal Q$ be as above. %There are constants $\lambda_1,\mu_1$ that depend on the only on the presentation of the group $G$ with the following property.

Let $\mathcal{E}$ be a finite system of triangular equations over a finite set of variables $Y$, and $(g_y)_{y\in Y}$ a solution in $G$. Then for each variable $y\in Y\cup Y^{-1}$, there is a path $p_y\in\mathcal Q$ with $p_{y^{-1}}=(p_y)^{-1}$ and $\ov{p_y}=g_y$, and for each equation $\varepsilon\in \mathcal E$ written as $y_1y_2y_3=1$ and for each $i\in\{1,2,3\}$, there exist paths $s_{\varepsilon,i}\in\mathcal Q$ and $t_{\varepsilon,i}\in\mathcal Q$ such that
\begin{enumerate}
\item $p_{y_i}=s_{\varepsilon,i}t_{\varepsilon,i}s_{\varepsilon,i+1}^{-1}$ (here $i+1$ is taken modulo $3$),
\item $|t_{\varepsilon,i}|\le D_1$ and $\ov{t_{\varepsilon,1}t_{\varepsilon,2}t_{\varepsilon,3}}=1$ in $G$, where the constant $D_1$ depends only on the presentation of the group $G$ and the system $\mathcal E$.
\end{enumerate}
Moreover, paths $p_y$ and $s_{\varepsilon,i}$ may be assumed to be $(\lambda_1,\mu_1)$-quasigeodesic, where the constants $\lambda_1,\mu_1$ depend only on the presentation of the group~$G$.
\end{prop}
\begin{figure}[h]
 \centering
 \includegraphics[height=1.5in]{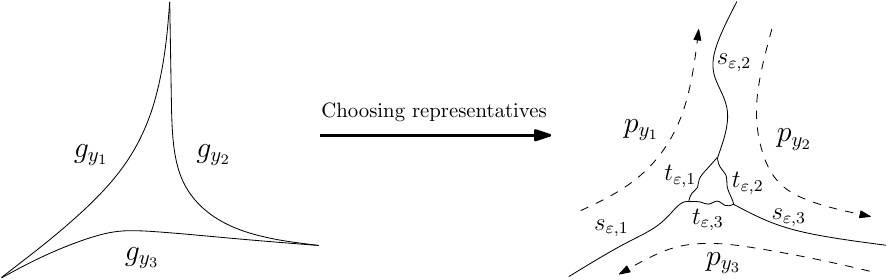}
 \caption{For each variable $y\in Y$, a path $p_y$ is chosen so that all triangles arising from equations $\varepsilon\in\mathcal E$ degenerate into tripods.}\label{fig:reps}
\end{figure}

The collection of paths $(p_y)_{y\in Y}$ above is called a system of {\em $D_1$-canonical representatives} for $\mathcal E$ and $(g_y)_{y\in Y}$.

\section{Big Powers Products}
The following notation is in use throughout the whole paper. Virtually cyclic groups are called elementary. Let $G$ be a non-elementary hyperbolic group with a  generating set $A$.  Put $\mathcal A=A\cup A^{-1}$ and denote by ${\mathcal A}^\ast$ the free monoid generated by $\mathcal A$.
For a word $h \in {\mathcal A}^\ast$  by $\ov h$ we denote the image of $h$ under the canonical epimorphism  ${\mathcal A}^\ast \to G$.  We say that a word $h$ is (quasi)geodesic in $G$ if the corresponding
path in the Cayley graph $\mathcal C(G,A)$ of $G$ relative to $\mathcal A$ is (quasi)geodesic.  For $g\in G$, by $|g|$ we denote the geodesic length of $g$. Let $\delta$ be a hyperbolicity constant of $G$ relative to $A$.

For an element $g\in G$ of infinite order, $E(g)$ denotes the unique maximal elementary subgroup of $G$ containing $g$.

The following lemma is due to Ol'shanski~\cite{Ol}.
\begin{lemma}\label{olsh} \cite[Lemma~2.3]{Ol} Let $h_1,\ldots,h_\ell$ be words in $\mathcal A$ representing elements
$\ov h_1,\ldots,\ov h_\ell$ of infinite order in $G$ for which $E(\ov h_i)\neq
E(\ov h_j)$ for $i\neq j$.
Then there exist constants $\lambda=\lambda(h_1,\ldots,h_\ell)>0$,
 $\varepsilon=\varepsilon(h_1,\ldots,h_\ell)\ge 0$, and $N=N(h_1,\ldots,h_\ell)>0$ such that
any word of the form
\[
h_{i_1}^{m_1}h_{i_2}^{m_2}\cdots h_{i_s}^{m_s},
\]
where  $i_k\in \{1,\ldots,\ell\}$,  $i_k\neq i_{k+1}$ for $k=1,\ldots, s-1$, and $|m_k|> N$ for $k=2,\ldots, s-1$,  is $(\lambda,\varepsilon)$-quasigeodesic in $G$.
\end{lemma}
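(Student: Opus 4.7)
The plan is to prove the lemma by combining two ingredients: first, each individual power $\ov h_i^{\,m}$ traces a quasigeodesic path with constants depending only on $\ov h_i$, and second, at each junction of the concatenated product the ``corner'' opens widely enough that no significant backtracking occurs. Together these two facts show the concatenated path is a local quasigeodesic with uniform constants, after which the standard local-to-global principle for quasigeodesics in $\delta$-hyperbolic spaces (see \cite[III.H.1.13]{BH}) yields a global $(\lambda,\varepsilon)$-quasigeodesic.

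First, I would handle the single-power case. Since each $\ov h_i$ has infinite order in the hyperbolic group $G$, the cyclic subgroup $\langle \ov h_i\rangle$ is undistorted: the translation length of a loxodromic element is positive, so the word $h_i^m$ represents a $(\lambda_i,\varepsilon_i)$-quasigeodesic in the Cayley graph for constants depending only on $\ov h_i$. Taking $\lambda_0=\min_i\lambda_i$ and $\varepsilon_0=\max_i\varepsilon_i$ yields uniform constants for all individual large-power subwords. Moreover, iterated powers of $\ov h_i$ track a quasi-axis $\gamma_i$ in the Cayley graph along which $\ov h_i$ acts by translation (up to bounded error).

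Second, and this is the heart of the matter, I would establish a ``corner lemma'': there is a constant $N_0=N_0(h_1,\dots,h_\ell)$ such that whenever $i\ne j$, the paths corresponding to $h_i^{m}$ and $h_j^{m'}$ leaving a common endpoint can fellow travel for at most a bounded distance, regardless of how large $|m|,|m'|>N_0$ are. The mechanism is boundary-dynamical: the maximal elementary subgroup $E(\ov h)$ of a loxodromic $\ov h$ coincides with the setwise stabilizer of its attracting/repelling pair $\{\ov h^{\pm\infty}\}\subseteq \partial G$, so $E(\ov h_i)\ne E(\ov h_j)$ forces the fixed-point pairs to be distinct, and the quasi-axes $\gamma_i,\gamma_j$ through any given basepoint diverge linearly after a bounded overlap. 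Quantifying this divergence using $\delta$-thinness of geodesic triangles (and Lemma~\ref{fellow}) gives the required uniform bound on the length of possible fellow-traveling at each corner.

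Third, choosing $N$ much larger than $N_0$ together with the fellow-travel and quasigeodesic constants, I would verify that any subword of $h_{i_1}^{m_1}\cdots h_{i_s}^{m_s}$ spanning one junction (say $h_{i_k}^{m_k}h_{i_{k+1}}^{m_{k+1}}$ with the relevant middle exponents exceeding $N$) is itself quasigeodesic with uniform constants: a geodesic with the same endpoints must first closely shadow $\gamma_{i_k}$, then turn and shadow $\gamma_{i_{k+1}}$, and the corner lemma forces the turn to occur within bounded distance of the algebraic junction, so the actual path length matches the geodesic length up to multiplicative and additive constants. This gives local quasigeodicity on scale $O(N)$, and the local-to-global theorem then upgrades this to a global $(\lambda,\varepsilon)$-quasigeodesic, with $\lambda,\varepsilon,N$ depending only on $h_1,\dots,h_\ell$ and $\delta$. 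The main obstacle is the quantitative corner lemma in the second step: translating the purely dynamical statement $E(\ov h_i)\ne E(\ov h_j)$ into a computable uniform bound on the overlap of the two quasi-axes requires a careful slim-triangle analysis in the Cayley graph.
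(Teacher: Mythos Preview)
The paper does not actually prove this lemma: it is quoted verbatim from Ol'shanski~\cite[Lemma~2.3]{Ol} and used as a black box, so there is no ``paper's own proof'' to compare against. Your outline is therefore not a comparison target but a reconstruction of Ol'shanski's argument.

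That said, your sketch is the standard and correct route. The three ingredients---undistortion of $\langle \ov h_i\rangle$ (positive translation length of loxodromics), a uniform bound on the overlap of quasi-axes through a common point when $E(\ov h_i)\neq E(\ov h_j)$, and the local-to-global principle for quasigeodesics---are exactly what goes into Ol'shanski's proof. One small point you should make explicit when carrying this out: the hypothesis only requires $|m_k|>N$ for the \emph{interior} exponents $k=2,\ldots,s-1$, so the first and last blocks $h_{i_1}^{m_1}$ and $h_{i_s}^{m_s}$ may be short. Your local-to-global step must therefore be arranged so that the ``local'' windows are centered on junctions rather than on blocks; then the possibly short terminal blocks are absorbed into the additive constant $\varepsilon$ and cause no trouble. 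Apart from that bookkeeping, the argument goes through as you describe.
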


%----------changes------------
Elements $g_1,g_2\in G$ %of infinite order
are termed {\em commensurable} if $h^{-1}g_1^{n_1}h=g_2^{n_2}$ for some $h\in G$ and $n_1,n_2\in \mathbb Z$, % $n_1,n_2\neq 0$.
and $n_1,n_2$ not both zero. Note that within this terminology, an element of finite order is commensurable with any other group element. This is a convenience choice, and will not be of much consequence. Observe also that $E(g_1)\neq E(g_2)$ for non-commensurable $g_1$ and $g_2$.
Further, we say that an element $g\in G$ is of {\em dihedral type} if $h^{-1}g^{n_1}h=g^{n_2}$ for some $h\in G$ and $n_1,n_2\in \mathbb Z$, $n_1n_2\le 0$.
%------------------------------

\begin{prop}
\label{C1} Let $G$ be a non-elementary hyperbolic group.
Let $h,h_0,h_1$ be words in $\mathcal A$ such that elements $\ov h,\ov h_0,\ov h_1\in G$ are of infinite order and pairwise non-commensurable. Then
there exists a number $L>0$ and constants $\lambda, \varepsilon$ that depend on $h, h_0,h_1$,
such that the following condition holds. Let $g$ be a word of the form
\[
g=g_1^{m_1}g_2^{m_2}\cdots g_k^{m_k},
\]
where $g_i\in \{h^{\pm 1},h_0^{\pm 1},h_1^{\pm 1}\}$, $m_i\ge L$ ($i=1,\ldots, k$), and $g_{i}\neq g_{i+1}^{\pm 1}$ ($i=1,\ldots, k-1$).
Then $g$ is $(\lambda, \varepsilon)$-quasigeodesic in $G$.
\end{prop}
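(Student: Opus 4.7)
The plan is to deduce the proposition directly from Ol'shanski's Lemma~\ref{olsh}, applied with $\ell=3$ and $(h_1,h_2,h_3)=(b,f_0,f_1)$. The only non-routine step I foresee is to verify that the hypothesis of that lemma --- that $\ov{h}_1, \ov{h}_2, \ov{h}_3$ have infinite order and pairwise distinct maximal elementary subgroups $E(\ov{h}_i)$ --- follows from the pairwise non-commensurability of $\ov b, \ov{f}_0, \ov{f}_1$. This is the main obstacle, mild though it is, because once it is in place the rest is bookkeeping.

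To handle it I would use that for infinite-order $g \in G$ the maximal elementary subgroup $E(g)$ is virtually cyclic, hence contains a unique maximal infinite cyclic subgroup $\langle g_0 \rangle$ of finite index. Consequently, any two infinite-order elements $x, y \in E(g)$ have nonzero powers satisfying $x^{n_1} = g_0^{a} = y^{n_2}$ for suitable exponents, so $x$ and $y$ are commensurable (witnessed by $h=1$). Contrapositively, pairwise non-commensurable infinite-order elements have pairwise distinct maximal elementary subgroups. Since $\ov b, \ov{f}_0, \ov{f}_1$ are (implicitly) of infinite order and pairwise non-commensurable, the hypothesis of Lemma~\ref{olsh} is met for $h_1=b$, $h_2=f_0$, $h_3=f_1$.

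Let $\lambda, \varepsilon, N$ be the constants the lemma then produces, and set $M := N$. I would rewrite each block $g_l^{m_l}$ in $g = g_1^{m_1}\cdots g_k^{m_k}$ as $h_{i_l}^{n_l}$, where $i_l \in \{1,2,3\}$ records the base letter of $g_l \in D$ and $n_l = \pm m_l$ carries its sign. The condition $g_l \neq g_{l+1}^{\pm 1}$ translates precisely to $i_l \neq i_{l+1}$, and $m_l > M = N$ gives $|n_l| > N$ for every $l$, which is strictly stronger than the condition required by Lemma~\ref{olsh} (which only asks $|n_l|>N$ for interior $l$). The lemma then delivers that $g$ is $(\lambda,\varepsilon)$-quasigeodesic, as required.
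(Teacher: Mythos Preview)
Your proposal is correct and follows exactly the paper's approach: the paper's entire proof reads ``The statement immediately follows from Lemma~\ref{olsh},'' and you have simply spelled out the translation (pairwise non-commensurability $\Rightarrow$ infinite order and $E(\ov{h}_i)\neq E(\ov{h}_j)$, then absorb the signs of $g_l\in D$ into the exponents). Your added detail is sound and matches the intended argument.
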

\begin{proof}
The statement follows from Lemma~\ref{olsh} since the elements $\ov h,\ov h_0,\ov h_1$ are pairwise non-commensurable and therefore $E(\ov h), E(\ov h_1), E(\ov h_2)$ are pairwise distinct.
\end{proof}

The following corollary replaces $h,h_0,h_1$ with $b=h^{L},f_0=h_0^{L},h_1=f_1^L$.

%infinite order!!!!!!
\begin{cor}
\label{C1''} Let $G$ be a non-elementary hyperbolic group.
Let $h,h_0,h_1$ be words in $\mathcal A$ such that elements $\ov h,\ov h_0,\ov h_1\in G$ are of infinite order and pairwise non-commensurable. Let $L,\lambda,\varepsilon$ be as provided in Proposition~\ref{C1}. Put $b=h^L,f_0=h_0^L,f_1=h_1^L$. Let $g$ be a word of the form
\[
g=g_1^{m_1}g_2^{m_2}\cdots g_k^{m_k}
\]
where $g_i\in D=\{b^{\pm 1},f_0^{\pm 1},f_1^{\pm 1}\}$, $m_i>0$ ($i=1,\ldots, k$), and $g_{i}\neq g_{i+1}^{\pm 1}$
($i=1,\ldots, k-1$).
Then $g$ is $(\lambda, \varepsilon)$-quasigeodesic in $G$.
\end{cor}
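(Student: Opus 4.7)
The plan is to reduce directly to Proposition~\ref{C1}. First I would apply that proposition to the given triple $b, f_0, f_1$ to obtain a threshold $M_0 > 0$ together with constants $\lambda_0, \varepsilon_0$ (depending only on $b, f_0, f_1$) such that every product in $D = \{b^{\pm 1}, f_0^{\pm 1}, f_1^{\pm 1}\}$ with no two adjacent letters mutually inverse and all exponents strictly greater than $M_0$ is $(\lambda_0, \varepsilon_0)$-quasigeodesic in $G$. I would then set $M = M_0 + 1$ and perform the substitution $b \mapsto b^M$, $f_0 \mapsto f_0^M$, $f_1 \mapsto f_1^M$ announced in the remark preceding the corollary, reusing the symbols $b, f_0, f_1$ for the new powered triple as in the statement.

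Before using this substitution, I need to verify that non-commensurability is preserved under passage to nontrivial powers: if $h^{-1}(\ov b^{\,M})^{n_1} h = (\ov{f_0}^{\,M})^{n_2}$ for some $h \in G$ and nonzero integers $n_1, n_2$, the same identity reads $h^{-1}\ov b^{\,Mn_1} h = \ov{f_0}^{\,Mn_2}$, contradicting the non-commensurability of $\ov b$ and $\ov{f_0}$. The analogous argument handles the two other pairs, so the renamed triple $b^M, f_0^M, f_1^M$ still satisfies the hypothesis of Proposition~\ref{C1}.

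Now given any word $g = g_1^{m_1} \cdots g_k^{m_k}$ in the alphabet $D$ (with the powered meaning) having $m_i > 0$ and $g_i \neq g_{i+1}^{\pm 1}$, unfolding each new letter as an $M$-th power in the original alphabet produces a product of exactly the same shape but with exponents $M m_i \geq M > M_0$, while the adjacency condition $g_i \neq g_{i+1}^{\pm 1}$ is preserved verbatim (the new letters being proper powers of the original ones). Proposition~\ref{C1} applied to this unfolded product yields the desired $(\lambda_0, \varepsilon_0)$-quasigeodesicity, and since the unfolded word and $g$ label the same path in the Cayley graph of $G$, this is precisely the claim of the corollary. I do not anticipate any genuine obstacle: the argument amounts to one substitution and one routine check on non-commensurability.
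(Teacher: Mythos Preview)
Your proposal is correct and is exactly the argument the paper intends: the one-line proof preceding the corollary (``Replacing $b,f_0,f_1$ with $b^{M},f_0^{M},f_1^{M}$'') is precisely your reduction to Proposition~\ref{C1}, and you have additionally spelled out the routine check that non-commensurability survives passage to powers and that the unfolded word labels the same Cayley-graph path.
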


%infinite order!!!!!
\begin{cor}
\label{C1'} Let $G$ be a non-elementary hyperbolic group.
Let words $b,f_0,f_1$ be as in Corollary~\ref{C1''}. Suppose
\[
\ov g_1^{m_1}\ov g_2^{m_2}\cdots \ov g_k^{m_k}=\ov{g'_1}^{m'_1}\ov{g'_2}^{m'_2}\cdots \ov{g'_l}^{m'_l},
\]
where $g_i,g'_i\in D=\{b^{\pm 1},f_0^{\pm 1},f_1^{\pm 1}\}$, $m_i,m'_i>0$, and
$g_{i}\neq g_{i+1}^{\pm 1}$, $g'_{i}\neq (g'_{i+1})^{\pm 1}$. Then $k=l$ and $g_i=g'_i$.
\end{cor}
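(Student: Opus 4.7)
The plan is to induct on the total block count $k+l$, with Corollary~\ref{C1''} as the engine. A $(\lambda,\varepsilon)$-quasigeodesic loop based at $e$ has length at most $\lambda\varepsilon$; provided that $b,f_0,f_1$ have been replaced by $b^N,f_0^N,f_1^N$ with $N$ large enough (which leaves $\lambda,\varepsilon$ unchanged), every non-empty big-powers word has length exceeding $\lambda\varepsilon$ and therefore cannot represent the identity. This handles the base case $k+l \le 1$ immediately: a non-empty side would be a non-trivial big-powers word equal to $1$ in $G$, contradicting the length bound.

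For the inductive step with $k,l \ge 1$, I split on the relation between $g_1$ and $g'_1$. If $g_1 \ne (g'_1)^{\pm 1}$, form the concatenation
\[
W = (g'_l)^{-m'_l}\cdots (g'_1)^{-m'_1}\,g_1^{m_1}\cdots g_k^{m_k}.
\]
All internal junctures of $W$ are non-cancelling by the hypotheses on each original side, and the central juncture $((g'_1)^{-1}, g_1)$ is non-cancelling precisely by the case assumption; so Corollary~\ref{C1''} shows $W$ is a $(\lambda,\varepsilon)$-quasigeodesic. But $\ov W = 1$ in $G$ by the given equation, contradicting the loop-length bound. If instead $g_1 = (g'_1)^{-1}$, multiplying the equation on the left by $\ov{g_1}^{-m_1}$ produces
\[
\ov{g_2}^{m_2}\cdots \ov{g_k}^{m_k} = \ov{g_1}^{-(m_1+m'_1)}\,\ov{g'_2}^{m'_2}\cdots \ov{g'_l}^{m'_l},
\]
still of the required form because $g'_2 \ne (g'_1)^{\pm 1}=g_1^{\pm 1}$, and with total block count $k+l-1$; the inductive hypothesis then forces $g_2 = g'_1 = g_1^{-1}$, contradicting $g_2 \ne g_1^{\pm 1}$.

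Hence $g_1 = g'_1$. Multiplying on the left by $\ov{g_1}^{-\min(m_1,m'_1)}$, if $m_1 = m'_1$ both leading blocks vanish and induction applied to the strictly shorter equation yields $k-1 = l-1$ and $g_i = g'_i$ for $i \ge 2$, completing the proof; if $m_1 \ne m'_1$, the resulting shorter equation has leading letters $g_1$ and $g'_2$, which induction would force equal, once more contradicting $g'_2 \ne (g'_1)^{\pm 1}$. So the induction closes and delivers $k=l$ with $g_i=g'_i$ for all $i$. The only delicate point in this scheme is the threshold on $N$: the loop-length contradiction used in the base case and in the case $g_1 \ne (g'_1)^{\pm 1}$ requires $N$ to exceed $\lambda\varepsilon$, and this is the sole place where passage to sufficiently high powers of $b,f_0,f_1$ is genuinely needed.
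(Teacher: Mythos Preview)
Your proof is correct and uses the same mechanism as the paper: form the product of the inverse of one side with the other, invoke Corollary~\ref{C1''} to see it is $(\lambda,\varepsilon)$-quasigeodesic, and derive a contradiction from a nontrivial quasigeodesic loop. The paper's proof is the one-line instruction to apply Corollary~\ref{C1''} to $(g_k^{-1})^{m_k}\cdots(g_1^{-1})^{m_1}(g'_1)^{m'_1}\cdots(g'_l)^{m'_l}$, leaving the juncture cancellation and the length threshold implicit; your induction on $k+l$ simply unpacks this (one cosmetic point: in your contradiction cases you should note that when $k=1$ the left side becomes empty and the contradiction comes from the block count rather than from the nonexistent $g_2$).
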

\begin{proof}
Apply Corollary~\ref{C1''} to the product
\[(g_k^{-1})^{m_k}\cdots (g_2^{-1})^{m_2}(g_1^{-1})^{m_1}{g'_1}^{m'_1}{g'_2}^{m'_2}\cdots {g'_l}^{m'_l}.\]
\end{proof}

%---------------changes-----------------------
The following lemma is well known but we exhibit it here for convenience.
\begin{lemma}\label{le:comm}Let $G$ be a group generated by a finite set $X$. Let $g_1,g_2$ be two words in the alphabet $X\cup X^{-1}$. Let $p$ and $q$ be two paths in the Cayley graph of $G$ relative to $X$. If a segment of $p$ labeled by $g_1^m$, with $m\ge (2|X|)^{1+K+|g_2|}$, asynchronously $K$-fellow travels with a segment of $q$ labeled by $g'g_2^{m'}g''$, with $|g'|,|g''|<|g_2|$, $m'\in\mathbb Z$, then $\ov{g_1}$ and $\ov{g_2}$ are commensurable in $G$. Moreover, in that case the conjugated powers of $\overline{g}_1, \overline{g}_2$ can be chosen to be non-negative.
\end{lemma}
\begin{proof}
Note that in such a case, the endpoint of each copy of $g_1$ in $p$ is connected by a path of length at most $K$ with a point on $q$. Therefore, the endpoint of each copy of $g_1$ is connected by a path labeled by a word $u_i$, $i=1,\ldots,m$ with an endpoint of a copy of $g_2$ so that $|u_i|\le K+|g_2|$.
Since there are at most $(2|X|)^{1+K+|g_2|}$ words of length $\le K+|g_2|$, there is at least one repetition $u_i=u_j$, $j>i$, yielding that in $G$ one has
\begin{equation}
\ov{g}_1^{j-i}=\overline{u}_i \ov{g}_2^{k} \overline{u}_i^{-1},\label{eq:conjugated}
\end{equation} i.e. that $\ov g_1$, $\ov g_2$ are commensurable (see Figure~\ref{fig:comm2}).
\begin{figure}[h]
 \centering
 \includegraphics[height=0.9in]{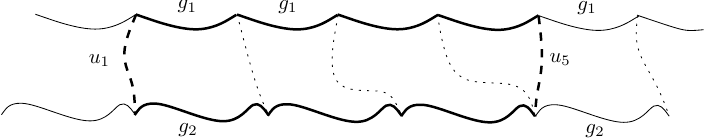}
 \caption{In this example, if $u_1=u_5$, then $\ov{g}_1^{4}=\overline{u}_1 \ov{g}_2^{3} \overline{u}_1^{-1}$.}\label{fig:comm2}
\end{figure}
\end{proof}
%----------------------------------------------

\begin{prop}\label{factor lemma}Let $G$ be a non-elementary hyperbolic group.
Let $b,f_0,f_1$ be words in $\mathcal A$ such that elements $\ov h,\ov h_0,\ov h_1\in G$ are of infinite order, non-dihedral type, and pairwise non-commensurable. Let constants $\lambda, \varepsilon$ be as provided by Proposition~\ref{C1}. Let $K\in\mathbb Z$,
$K>0$ be given. Then
there exists $M\in\mathbb Z$, $M>0$ (that depends on $b, f_0, f_1,\lambda,\varepsilon, K$) such that the following condition holds.
Let $g\in G$ be such that
\[
g=\ov c_1\ov g_1^{m_1}\ov g_2^{m_2}\cdots \ov g_k^{m_k}\ov c_2=\ov c_3\ov {g'}_1^{m'_1}\ov {g'}_2^{m'_2}\cdots \ov{g'}_l^{m'_l}\ov c_4
\]
where $|c_i|\le K$, $g_i,g'_i\in D=\{b^{\pm 1},f_0^{\pm 1},f_1^{\pm 1}\}$, $m_i\ge M$ for $i=1,2,\ldots, k$, $m'_i\ge M$ for $i=2,\ldots,l-1$,
and $g_{i-1}\neq g_{i}^{\pm 1}$,
$g'_{i-1}\neq {g'_{i}}^{\pm 1}$.
Then one of the following occurs:
\begin{itemize}
\item[(a)] $(g_1,g_2,\ldots,g_{k})$=$(g'_2,g'_3,\ldots,g'_{l-1})$, or
\item[(b)] $(g_1,g_2,\ldots,g_{k})$=$(g'_2,g'_3,\ldots,g'_{l})$, or
\item[(c)] $(g_1,g_2,\ldots,g_{k})$=$(g'_1,g'_3,\ldots,g'_{l-1})$, or
\item[(d)] $(g_1,g_2,\ldots,g_{k})$=$(g'_1,g'_3,\ldots,g'_{l})$.
\end{itemize}
In particular, $(g_1,g_2,\ldots,g_{k})$ is a subsequence of $(g'_1,g'_2,\ldots,g'_{l})$.
\end{prop}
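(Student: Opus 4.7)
The plan is to extract from the two product expressions a pair of $(\lambda,\varepsilon)$-quasigeodesic paths, apply the fellow-travel lemmas of Section~\ref{se:hyperbolic-geometry}, and then use the combinatorial uniqueness of Corollary~\ref{C1'} to identify the interior letters. Let $p$ denote the path from $1$ to $g$ read off from the first expression $b_1 g_1^{m_1}\cdots g_k^{m_k}b_2$ and $q$ the path read off from the second expression. By Corollary~\ref{C1''}, after dropping the short prefix and suffix $b_i$ from each, we are left with two $(\lambda,\varepsilon)$-quasigeodesic subpaths whose initial points lie within distance $2K$ of each other and whose terminal points likewise lie within $2K$ of each other. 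Lemma~\ref{fellow} then yields a constant $K'=K'(\delta,\lambda,\varepsilon,K)$ so that these two subpaths asynchronously $K'$-fellow travel.

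Next, I would mark the interior \emph{corner vertices} along each path: let $v_i$ in $p$ be the vertex reached after $b_1g_1^{m_1}\cdots g_i^{m_i}$ for $1\le i\le k-1$, and similarly $v'_j$ in $q$. Choose $M$ large enough (depending on $\delta,\lambda,\varepsilon,K$, the word lengths of $b,f_0,f_1$, and $K'$) that any two consecutive interior corners of either path are separated by more than, say, $10K'$. The asynchronous $K'$-fellow travel together with this spacing forces a strictly monotone partial matching $\sigma$: for each $i$ with $2\le i\le k-2$ there is a unique index $\sigma(i)$ with $|v_i-v'_{\sigma(i)}|\le K'+O(1)$, and $\sigma$ is order-preserving. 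The justification is that off a corner, each path is locally ``straight'' along the axis of one of $\ov b,\ov f_0,\ov f_1$, while at a corner the direction changes genuinely---guaranteed by $E(\ov g_i)\ne E(\ov g_{i+1})$, which in turn follows from non-commensurability together with $g_i\ne g_{i+1}^{\pm1}$. Hence a corner of $p$ cannot be $K'$-close to an interior-of-syllable point of $q$ once $M$ is large.

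Having matched corners, I would upgrade these metric coincidences to combinatorial letter equalities by a local application of Corollary~\ref{C1'}. Concretely, around a matched pair $(v_i, v'_{\sigma(i)})$ one reads off a short element $u$ of length $\le K'$ such that a bounded window of $p$ around $v_i$ equals $u^{-1}$ times the corresponding window of $q$; rearranging and applying Corollary~\ref{C1'} to the resulting two big-powers products (absorbing $u$ into short boundary corrections) forces $g_i=g'_{\sigma(i)}$ and $g_{i+1}=g'_{\sigma(i)+1}$. Propagating through all matched indices gives $g_i=g'_{\sigma(2)+i-2}$ for $i=2,\ldots,k-1$. The possible values of the shift $\sigma(2)$ are determined by what the short boundary pieces $b_i$ of length $\le K$ can absorb: on the left, $b_1$ may or may not ``swallow'' the first syllable of $q$ relative to $b_3$, and symmetrically on the right, $b_2$ may or may not ``swallow'' the last syllable of $q$ relative to $b_4$. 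The four binary combinations of these absorption choices yield exactly the options (a)--(d).

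The main obstacle, and the step to which I would devote the most care, is the quantitative bookkeeping in the second paragraph: one must verify that the fellow-travel constant $K'$---which is itself inflated by the slack $K$ absorbed through the $b_i$---is outstripped by the minimum corner spacing $M\cdot\min(|\ov b|,|\ov f_0|,|\ov f_1|)-O(1)$, so that the monotone matching of corners is unambiguous and no corner of $p$ can mistakenly pair with an interior-of-syllable point of $q$. The non-commensurability of $\ov b,\ov f_0,\ov f_1$ enters precisely here, ensuring that corners are metrically detectable features of the path rather than bends that could be hidden within the $K'$-fellow-travel slack.
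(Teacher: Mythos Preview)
Your outline is essentially the paper's own argument: pass to $(\lambda,\varepsilon)$-quasigeodesics via Corollary~\ref{C1''}, invoke Lemma~\ref{fellow} to get asynchronous $L$-fellow travel, and then use non-commensurability of $\ov b,\ov f_0,\ov f_1$ to force the interior letter sequences to agree up to a possible one-step shift at each end. The paper's write-up is terser and skips your corner-matching layer: it simply takes $M$ larger than the number of group elements of length $\le L+\max\{|b|,|f_0|,|f_1|\}$, so that by pigeonhole a long $g_2^{m_2}$-block fellow-travelling the second path forces $g_2$ to be commensurable with (hence equal in $D$, up to sign fixed by orientation) whichever $g'_j$ it runs alongside; then one inducts.

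One point to tighten: your invocation of Corollary~\ref{C1'} in the third paragraph is misplaced. That corollary requires the two big-powers products to be \emph{equal} in $G$, whereas around a matched corner pair you only have $K'$-closeness, with a short connecting element $u$ that does not belong to the $(b,f_0,f_1)$-alphabet. Absorbing $u$ destroys the hypotheses of Corollary~\ref{C1'}. The mechanism that actually delivers $g_{i+1}=g'_{\sigma(i)+1}$ is the one you already gestured at in paragraph two: a sufficiently long $g_{i+1}^{m_{i+1}}$-segment $L$-fellow-travelling a $(g'_j)^{m'_j}$-segment forces, by pigeonhole on the finitely many possible offsets of bounded length, a conjugacy $g_{i+1}^a=u^{-1}(g'_j)^c u$ with $a,c>0$, i.e.\ commensurability, hence equality in $D$. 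Replace the appeal to Corollary~\ref{C1'} by this pigeonhole step (exactly the paper's choice of $M$) and your argument goes through.
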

\begin{proof}
By Lemma~\ref{olsh}, $g_1^{m_1}\cdots g_k^{m_k}$ and ${g'_1}^{m'_1}\cdots {g'_l}^{m'_l}$
are $(\lambda,\varepsilon)$-quasigeodesics. Then $c_1g_1^{m_1}\cdots g_k^{m_k}c_2$ and $c_3{g'_1}^{m'_1}\cdots {g'_l}^{m'_l}c_4$
are $(\lambda,\varepsilon_0)$-quasigeodesics, where $\varepsilon_0$ depends of $\lambda, \varepsilon, K$. By Lemma~\ref{fellow}, paths corresponding to these two words asynchronously $H$-fellow travel for some $H=H(\delta,\lambda,\varepsilon_0,K)$.

Put %$M$ is larger than twice the number of words of length
\[
N=(2|X|)^{\max\{|b|,|f_0|,|f_1|\}+H+1}, \mbox{ and } M>2N.
\]
%------changes---------------------
%Lemma~\ref{le:comm}
%----------------------------------
For each $j$, $1\le j\le k$, consider the points $o'_j$ and $t'_j$ of ${g'_1}^{m'_1}\cdots {g'_l}^{m'_l}$ that correspond to endpoints of $g_j^{m_j}$ under the aforementioned fellow travel. Let $o'_j$ and $t'_j$ belong to some ${g'_\alpha}^{m'_\alpha}$ and ${g'_\beta}^{m'_\beta}$, respectively, with $\alpha\le\beta$. If $\beta-\alpha\le 1$, then by the choice of $M$, there is a subword of $g_j^{m_j}$ labeled by $g_j^N$ that fellow travels with a subword of ${g'_\alpha}^{m'_\alpha}$ or ${g'_\beta}^{m'_\beta}$. By Lemma~\ref{le:comm}, a positive power of $g_j$ is conjugate to a positive power of  $g'_\alpha$ or $g'_\beta$. If $\beta-\alpha\ge 2$, then ${g'_{\alpha+1}}^{m'_{\alpha+1}}$ fellow travels with a subword of $g_j^{m_j}$. Since $1<\alpha+1<l$, $m'_{\alpha+1}\ge M$, so by Lemma~\ref{le:comm}, a positive power of $g'_{\alpha+1}$ is conjugate to a positive power of $g_j$. In either case, a positive power of $g_j$ is conjugate to a positive power of $g'_{i_j}$ with $\alpha\le i_j\le \beta$. By the choice of $b,f_1,f_2$, this implies that $g_j=g'_{i_j}$.

The above shows that $(g_1,g_2,\ldots, g_k)$ is a subsequence of $(g'_1,g'_2,\ldots,g'_l)$. By the same argument, $(g'_2,\ldots, g'_{l-1})$ is a subsequence of $(g_1,\ldots,g_k)$. Together these two facts immediately imply the statement.
%guarantees that for $g=g'_1$ or $g'_2$ (depending on whether $m_1'$ is large enough), non-negative powers of the elements $\ov g_1$ and $\ov g$ are conjugated. By the choice of $b,f_1,f_2$, this implies that $g_1={g}$. The statement then follows by an inductive argument.
\end{proof}

By~\cite[Lemma~3.8]{Ol}, in a non-elementary hyperbolic group $G$ there exist elements $\ov h,\ov h_0,\ov h_1\in G$ of infinite order that are of non-dihedral type and pairwise non-commensurable.
%(otherwise the boundary $\partial G$ of the group $G$ is countable; see, for example,~\cite{BH} or~\cite{GH} for details concerning boundaries of hyperbolic groups).
Let $L$ be as provided by %in Corollary~\ref{C1''} 
Proposition~\ref{C1}. Put $b=h^{L},f_0=h_0^{L},h_1=f_1^L$. For the rest of the paper, fix the elements $b,f_0,f_1$ and the set $D=\{b^{\pm 1},f_0^{\pm 1},f_1^{\pm 1}\}$. Let $M\in\mathbb Z$ be as described in Proposition~\ref{factor lemma}. Then we say that
$(b,f_0,f_1,M)$ is a {\em big powers tuple}. Further, an expression of the form
\begin{equation}\label{Rhem}
g_1^{m_1}g_2^{m_2}\cdots g_k^{m_k},\quad \mbox{where}\ g_i\in D,\ m_i\ge M,\ g_{i}\neq g_{i+1}^{\pm 1}
\end{equation}
is termed a $(b,f_0,f_1,M)$ {\em big powers product}. A path in the Cayley graph corresponding to a big powers product is called a {\em big powers path}.

Define a set of elements $R=R(b,f_0,f_1,M)\subseteq G$ to be
\begin{align*}%{rrl}
R = \{&g\in G\mid \exists k\in\mathbb N,g_i\in D, m_i\ge M,\ g_{i}\neq g_{i+1}^{\pm 1},\\
&g=\ov g_1^{m_1}\ov g_2^{m_2}\cdots \ov g_k^{m_k}\},
\end{align*}
that is the set of elements representable in the form~(\ref{Rhem}).

{\sc Remark.} Note that it is false in general that if $xy\in R$, then ${x\in R}, {y\in R}$.

\section{Counting gaps}\label{sec:gaps}
For a big powers product $g=g_1^{m_1}\cdots g_k^{m_k}$, $g_i\in D$, $m_i\ge M$, its subproduct of the form
%\[
% b^{m_{\mu}}  g_{\mu+1}^{m_{\mu+1} }\cdots  g_{\mu+\nu}^{m_{\mu+\nu}}  b^{m_{\mu+\nu+1}},
%\]
\[
 g_\mu^{m_{\mu}}  g_{\mu+1}^{m_{\mu+1} }\cdots  g_{\mu+\nu}^{m_{\mu+\nu}}  g_{\mu+\nu+1}^{m_{\mu+\nu+1}},\quad g_\mu=g_{\mu+\nu+1}=b,
\]
where $g_i\neq b,b^{-1}$ for $i=\mu+1,\ldots,\mu+\nu$, is called a {\em $b$-syllable}.
Similarly, a subproduct of the form
%\[
% (b^{-1})^{m_{\mu}}  g_{\mu+1}^{m_{\mu+1} }\cdots  g_{\mu+\nu}^{m_{\mu+\nu}}  (b^{-1})^{m_{\mu+\nu+1}},
%\]
\[
 g_\mu^{m_{\mu}}  g_{\mu+1}^{m_{\mu+1} }\cdots  g_{\mu+\nu}^{m_{\mu+\nu}}  g_{\mu+\nu+1}^{m_{\mu+\nu+1}},\quad g_\mu=g_{\mu+\nu+1}=b^{-1},
\]
where $g_i\neq b,b^{-1}$ for $i=\mu+1,\ldots,\mu+\nu$, is called a $b^{-1}$-syllable. We say $b^{\pm 1}$-syllable as a shortcut for a $b$- or $b^{-1}$-syllable. \textsl{}With each $b$-syllable $s$ we associate an integer $\omega_s\in\mathbb Z$
so that
\[
\omega_s=\varepsilon_1+\varepsilon_2+\cdots+\varepsilon_\nu,
\] where $\varepsilon_i=0$ if $g_{\mu+i}=f_{1}^{\pm 1}$, and $\varepsilon_i$ is such that $g_{\mu+i}=f_{0}^{\varepsilon_i}$, otherwise. The number
$\omega_s$ is called the {\em $b$-gap} associated with $s$. We define $b^{-1}$-gaps in the same way. Following Rhemtulla~\cite{Rhe},
given $\omega\in \mathbb Z$ and a big powers product~$x$ we define $\delta_\omega(x)$ to be the number of $b$-gaps $\omega$ in $x$.
%By Corollary~\ref{C1'}, $\delta_\omega(x)$ is well-defined.
By $\delta^*_\omega(x)$ we denote the number of $b^{-1}$-gaps $\omega$. Observe that
\begin{equation}\label{eq:delta_star}
\delta_\omega(x)=\delta^*_{-\omega}(x^{-1}).
\end{equation}
Further, by $\gamma_d(x)$, $d\in\mathbb Z$, we denote the number of integers $\omega\in\mathbb Z$ such that
\[
\delta_\omega(x)-\delta^*_{-\omega}(x)\neq 0\ \mod d
\]
(if $d=0$ this means $\delta_\omega(x)-\delta^*_{-\omega}(x)\neq 0$ as integers). 
Note that
\[
\gamma_d(x)=\gamma_d(x^{-1}).
\]
Denote $\delta_\omega(x)-\delta^*_{-\omega}(x)$ by $\Delta_\omega(x)$. For a big powers path $p$, by $\delta_\omega(p)$ (or $\delta^*_\omega(p)$, or $\Delta_\omega(p)$) we mean the value of $\delta_\omega$ (respectively, $\delta^*_\omega$, $\Delta_\omega$) for the corresponding word. Further, note that by Corollary~\ref{C1'}, the collection of $b$- and $b^{-1}$-syllables is the same for any big powers product representing the element $g\in R$, which means that $\delta_\omega,\delta^*_\omega, \Delta_\omega, \gamma_d$ are also well defined as functions of $g\in R$.

We will show that for each $w\in F$, $d=e(w)$ and for each $l$ there is an appropriate choice of $b,f_0,f_1,M$ such that
$\gamma_d$ is bounded on $w[G]^l\cap R$ and unbounded on $w(G)\cap R$. In what follows the word $w$ and therefore $d=e(w)$ are fixed, so we omit the subscript $d$ and write $\gamma(x)$ instead of $\gamma_d(x)$.

To this end, we use a ``coarse'' version $\widetilde{\Delta_\omega}$ of $\Delta_\omega$, defined as follows. % We need a certain ``refinement'' of the Cayley graph of $G$ employed by Dahmani and Guirardel in~\cite[Section 9]{DG}.
Let $\mathcal C=\mathcal C(G,A)$ be the Cayley graph of $G$ relative to the generating set $A$.
%Let $V(\mathcal C)$ be the set of vertices of the graph $\mathcal C$.
%Consider so-called Rips complex $P_{50\delta}(\mathcal C)$ whose set of vertices is $V(\mathcal C)$, and whose simplices are subsets of $V(\mathcal C)$ of diameter at most $50\delta$.
Let $\mathcal K$ be as described in Section~\ref{sub:reps}, i.e. the $1$-skeleton of barycentric subdivision of Rips complex $P_{50\delta}(\mathcal C)$, and let $\iota$ be the embedding $\iota: V(\mathcal C)\to V(\mathcal K)$ extended to paths in $\mathcal C$.

Let $q$ be an arbitrary path in $\mathcal K$. Let $H>0$ be a parameter to be chosen later. Denote
\begin{equation}\label{eq:defn_delta_tilde}
\widetilde{\delta_\omega}(q)=\max \sum_i \delta_\omega(p_i),
\end{equation}
where the maximum is taken over all systems of  non-intersecting subpaths $q_1,q_2,\ldots$ of $q$ that asynchronously  $H$-fellow travel with $\iota(p_1),\iota(p_2),\ldots$, respectively, where $p_1,p_2,\ldots$ are big power paths in $\mathcal C$. We define $\widetilde{\delta^*_\omega}$ accordingly and put
\begin{equation}\label{eq:delta-hat}\widetilde{\Delta_\omega}(p)=\widetilde{\delta_\omega}(p)-\widetilde{\delta^*_{-\omega}}(p).
\end{equation}
Note that $\widetilde{\Delta_\omega}(p)=-\widetilde{\Delta_{\omega}}(p^{-1})$.

\begin{lemma}\label{le:delta_vs_tilde} Let $\varepsilon_0,\lambda_0$ be given. Let $q$ be a $(\varepsilon_0,\lambda_0)$-quasigeodesic path in $\mathcal K$. Suppose there is a big powers path $p$ in $\mathcal C$ such that $q$ and $\iota(p)$ asynchronously $H$-fellow travel. Then if $M$ in~\eqref{Rhem} is large enough, $\widetilde{\Delta_\omega}(q)=\Delta_\omega (p)$.
\end{lemma}
\begin{proof}
Indeed, in such event, $\widetilde{\delta_\omega}(q)\ge \delta_\omega(p)$ immediately from~\eqref{eq:defn_delta_tilde}. To show the reverse inequality, we consider a subpath $q_1$ of $q$ that asynchronously $H$-fellow travels with a path $\iota(p_1)$, where $p_1$ is labeled by a big powers product $s_1$, as shown in Fig.~\ref{fi:prop3}. Since we are interested in counting $b$-gaps, we may assume that $s_1$ starts and ends with a power of $b$. Let $q_1$ fellow travel with a subpath $\iota(p_1')$ of $\iota(p)$, where  $p$ is labeled by a big powers product $g_1^{m_1}\cdots g_k^{m_k}$. Considering $(H+2\max\{|b|,|f_0|,|f_1|\})$- instead of $H$-fellow travel, we may assume that $p_1'$ is labeled by a word of the form $g_j^{m'_j}g_{j+1}^{m_{j+1}}\cdots g_{l-1}^{m_{l-1}}g_{l}^{m'_l}$, where $m'_j\le m_j$ and $m'_l\le m_l$. By fellow travel, the initial point of $q_1$ is connected with initial points $P,P'$ of paths $\iota(p_1)$, $\iota(p_1')$ by paths $r_1,r_1'$ in $\mathcal K$, respectively. Note that the endpoints $P',P$ of the path ${r_1'}^{-1}r_1$ are in $\iota(V(\mathcal C))$, so there is a word $c_1$, $|c_1|\le 2(H+2\max\{|b|,|f_0|,|f_1|\})$, read as a label of a path in $\mathcal C$ from $P'$ to $P$. Similarly, there is word $c_2$, $|c_2|\le 2(H+2\max\{|b|,|f_0|,|f_1|\})$, read as a label of a path in $\mathcal C$ from the terminal endpoint of $p_1$ to that of $p_1'$. (See Fig.~\ref{fi:prop3}.) This provides the equality
\begin{figure}[h]
 \includegraphics[height=1.6in]{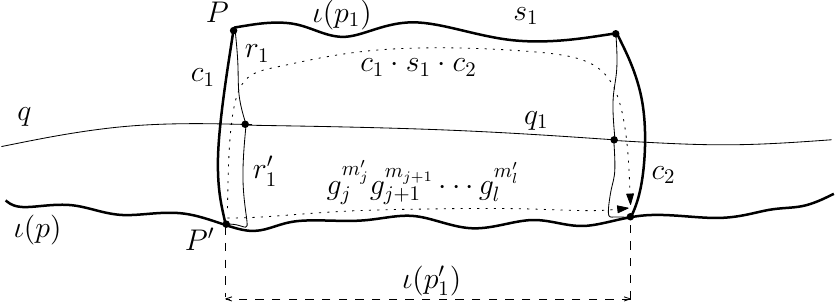}
 \caption{Fellow travel of $q_1$ with $\iota(p_1)$ and $\iota(p_1')$ provides equality~\eqref{eq:use_prop3}. Paths in $\mathcal K$ that are in $\iota(\mathcal{C})$ are represented by thicker lines.}\label{fi:prop3}
\end{figure}
% Since each vertex of $\mathcal K$ is distance at most $1$ from a vertex of $\mathcal C$, we may assume (possibly increasing $H$ by $1$) that endpoints of $q_1$ are in $\mathcal C$.
\begin{equation}\label{eq:use_prop3}
\ov{c_1}\cdot \ov{s_1}\cdot \ov{c_2}
=
\ov{g_j}^{m'_j}\ov{g_{j+1}}^{m_{j+1}}\cdots \ov{g_{l-1}}^{m_{l-1}}\ov{g_{l}}^{m'_l},
\end{equation}
where $|c_i|\le 2(H+2\max\{|b|,|f_0|,|f_1|\})$, $i=1,2$. Applying Proposition~\ref{factor lemma} to this equality, we conclude that if $M$ in~\eqref{Rhem} is large enough, there is a big powers product $s_1'$ within the product $g_j^{m_j}g_{j+1}^{m_{j+1}}\cdots g_{l-1}^{m_{l-1}}g_{l}^{m_l}$ with the same values of $b$-gaps as $s_1$.

Now note that by quasigeodesity of $q$ and $p$, the subpaths $p_1', p_2'$ of $p$ corresponding as described above to non-overlapping paths $q_1,q_2$ of $q$ cannot overlap on a subpath longer than some fixed number (depending on $\varepsilon_0,\lambda_0,\varepsilon, \lambda$), which is enough to guarantee that the respective big powers products $s_1'$ and $s_2'$ found within labels of $p_1',p_2'$ do not overlap more than on a power $b^{M_0}$, where $M_0$ depends on $b,$ $\varepsilon_0,\lambda_0,\varepsilon, \lambda$. Assume $M>M_0$.

It follows that for each system of paths $q_i$ in~\eqref{eq:defn_delta_tilde},
\[
\delta_\omega(p)\ge \sum_i\delta_\omega(p_i),
\]
so $\delta_\omega(p)\ge \widetilde{\delta_\omega}(p)$, i.e. $\delta_\omega(p)= \widetilde{\delta_\omega}(p)$. By~\eqref{eq:delta_star} it follows that the same holds for $\delta^*_{-\omega}$, resulting in $\widetilde{\Delta_\omega}(q)=\Delta_\omega (p)$.
\end{proof}
\medskip

\begin{lemma}\label{le:product}
Let $q'$, $q''$ be reduced paths in $\mathcal K$ such that the concatenation $q'q''$ is defined. Then if $M$ in~\eqref{Rhem} is large enough, $\widetilde{\Delta_\omega}(q'q'')=\widetilde{\Delta_\omega}(q')+\widetilde{\Delta_\omega}(q'')$ for all $\omega$ except, perhaps, nine values.
\end{lemma}
\begin{proof} Let $M_0$ be as provided by Proposition~\ref{factor lemma} with $K=2(H+2\max\{|b|,$ $|f_0|,$ $|f_1|\})$. Then by triangle inequality, definition of a quasigeodesic, and that of asynchronous fellow travel, there is $M\ge M_0$ such that if a big powers path labeled by $b^m$ asynchronously $(H+2\max\{|b|,|f_0|,|f_1|\})$-fellow travels with a big powers path labeled by $g^{M}$, $g\in\{b^{\pm 1}, f_0^{\pm 1}, f_1^{\pm 1}\}$, then $m\ge M_0$. Below we show that such $M$ suffices.

Let $q$ be a reduced path in $\mathcal K$ and $r_i$, $i=1,2,3$, its three subpaths that share a common point and asynchronously $H$-fellow travel with big power paths $\iota(p_i)$ labeled by $b$- or $b^{-1}$-syllables $s_i$, $i=1,2,3$. Without loss of generality we assume that the initial endpoint of the path $r_3$ is the leftmost (the earliest on $q$) of the three initial endpoints of paths $r_i$; and that the terminal endpoint of $r_2$ is the rightmost of the two terminal endpoints of paths $r_1,r_2$. Thus, the path $r_1$ is covered by $r_2, r_3$.

\medskip
{\sc Claim. } The $b$-syllable $s_1$ has the same $b$-gap $\omega_{s_1}$ as $s_2$ or $s_3$, provided $|\omega_{s_i}|\ge 3$, $i=1,2,3$.

\medskip
Indeed, let $s_1=g_1^{m_1}g_2^{m_2}\cdots g_{k-1}^{m_{k-1}}g_k^{m_k}$, $g_1=g_k=b$. This splits $r_1$ into subpaths $r_1=r_1^{(1)}\cdots r_1^{(k)}$ such that each $r_1^{(j)}$, $j=1,\ldots,k$, fellow travels with a subpath of $\iota(p_1)$ labeled by $g_j^{m_j}$. Represent $r$ as $r=r'r''$, where, for definiteness, $r'$ is contained in $r_3$, and $r''$ is the maximal subpath of $r$ contained in $r_2$ (without loss of generality, $r''$ is non-empty), as shown in Fig.~\ref{fi:split_r}.
\begin{figure}[h]
 \includegraphics[height=1.2in]{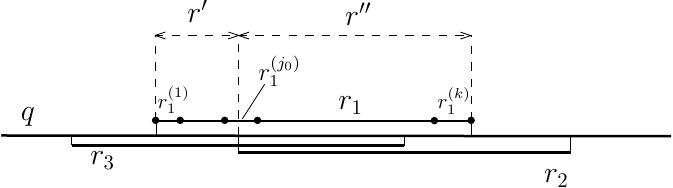}
 \caption{$r_1=r'r''$. The initial endpoint of $r''$ is contained in $r_1^{(j_0)}$.}\label{fi:split_r}
\end{figure}
Let the initial endpoint of $r''$ be contained in $r_1^{(j_0)}$ (if $r'$ is empty, put $j_0=1$).
%Note that for each $1\le j\le k$ except perhaps one value $j=j_0$, the subpath $r_1^{(j)}$ is contained entirely in $r_2$, or entirely in $r_3$. 
Assume without loss of generality that $j_0\le (k+1)/2$.
%so $r''$ contains at least $(k-1)/2$ subpaths $r_1^{(j)}$.
Note for later use that since $|\omega_{s_1}|\ge 3$, $k\ge 5$, so $j_0\le (k+1)/2\le k-2$. Let $s_2={g'_1}^{m'_1}{g'_2}^{m'_2}\cdots {g'_l}^{m'_l}$, $g'_1=g'_l=b$ or $g'_1=g'_l=b^{-1}$. Then $r''$ asynchronously $(H+2\max\{|b|,|f_0|,|f_1|\})$-fellow travels with both a terminal subpath of $\iota(p_1)$ labeled by $g_{j_0}^{m}g_{j_0+1}^{m_{j_0+1}}\cdots g_k^{m_k}$ ($m\le m_{j_0}$) and an initial subpath of $\iota(p_2)$ labeled by ${g'}_1^{m'_1}\cdots {g'}_{k_0-1}^{m'_{k_0-1}}{g'}_{k_0}^{m'}$, $m'\le m'_{k_0}$. If $k_0>1$, we disregard the part of $r''$ corresponding to ${g'}_{k_0}^{m'}$. As shown in Fig.~\ref{fi:prop3_2},
\begin{figure}[h]
 \includegraphics[width=4.5in]{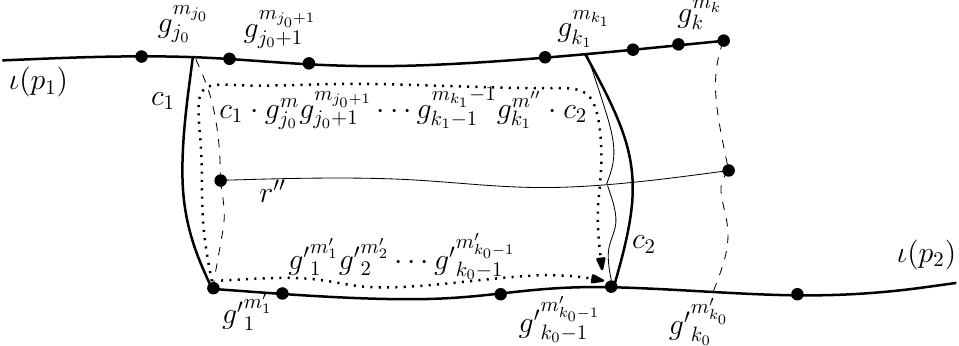}
 \caption{If $k_0>1$, the fellow travel of $r''$ with a subpath of $\iota(p_1)$ and a subpath of $\iota(p_2)$ delivers equality~\eqref{eq:le_product_1}. Paths in $\mathcal K$ that are in $\iota(\mathcal{C})$ are represented by thicker lines.}\label{fi:prop3_2}
\end{figure}
this delivers the equality
\begin{equation}\label{eq:le_product_1}
\ov {g'}_1^{m'_1}\ov {g'}_2^{m'_2}\cdots \ov{g'}_{k_0-1}^{m'_{k_0-1}}
=
\ov c_1\cdot \ov g_{j_0}^{m}\ov g_{j_0+1}^{m_{j_0+1}}\cdots \ov g_{k_1-1}^{m_{k_1}-1}\ov g_{k_1}^{m''}\cdot \ov c_2,
\end{equation}
where $m'_1,\ldots,m'_{k_0-1}\ge M\ge M_0$, \ and $m_{j_0+1},\ldots,m_{k_1-1}\ge M\ge M_0$, $m''\le m_{k_1}$, and $|c_i|\le 2(H+2\max\{|b|,|f_0|,|f_1|\})$, $i=1,2$.

On the other hand, if $k_0=1$, we similarly (See Fig.~\ref{fi:prop3_3})
\begin{figure}[h]
 \includegraphics[width=4.5in]{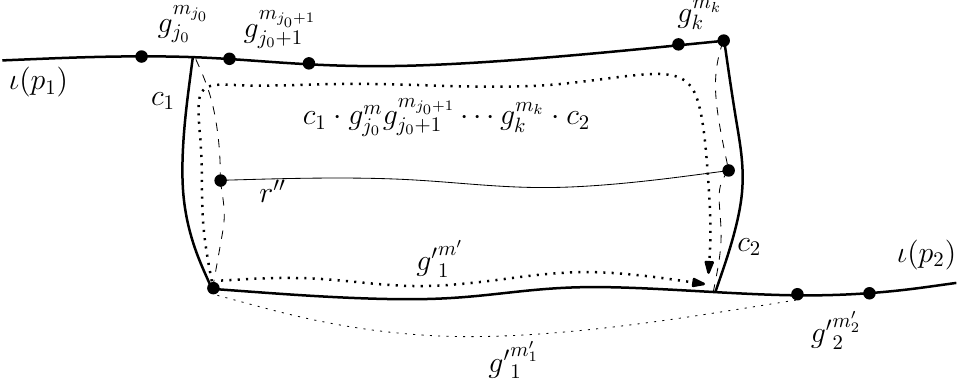}
 \caption{If $k_0=1$, the fellow travel of $r''$ with a subpath of $\iota(p_1)$ and a subpath of $\iota(p_2)$ delivers equality~\eqref{eq:le_product_2}. Paths in $\mathcal K$ that are in $\iota(\mathcal{C})$ are represented by thicker lines.}\label{fi:prop3_3}
\end{figure} obtain the equality
\begin{equation}\label{eq:le_product_2}
\ov {g'}_1^{m'}
=
\ov c_1\cdot \ov g_{j_0}^{m}\ov g_{j_0+1}^{m_{j_0+1}}\cdots \ov g_{k}^{m_k}\cdot\ov c_2,
\end{equation}
where $m'\ge M_0$ by the choice of $M$, and $m_{j_0+1},\ldots,m_{k}\ge M\ge M_0$, and $|c_i|\le 2(H+2\max\{|b|,|f_0|,|f_1|\})$, $i=1,2$.

In either case~\eqref{eq:le_product_1} or~\eqref{eq:le_product_2}, Proposition~\ref{factor lemma} applies, we get that $g'_1=g_{j_0}$ or $g'_1=g_{j_0+1}$. Note that $g'_1=b^{\pm 1}$, while $g_j=b^{\pm 1}$ only if $j=1$ or $j=k$. As we mentioned above, $j_0\le k-2$, so the only option is $j_0=0$, therefore $g'_1=g_1=b$.

Now we have that $r''$ asynchronously $(H+2\max\{|b|,|f_0|,|f_1|\})$-fellow travels with a subpath of $\iota(p_1)$ labeled by $g_{1}^{m}g_{2}^{m_{2}}\cdots g_k^{m_k}$, $m\le m_1$, and a subpath of $\iota(p_2)$ labeled by ${g'}_1^{m'_1}\cdots {g'}_{k_0-1}^{m'_{k_0-1}}{g'}_{k_0}^{m'}$. Applying the similar argument to the subpath of $\iota(p_1)$ labeled by $g_{2}^{m_{2}}\cdots g_k^{m_k}$, $m_2,\ldots,m_k\ge M$, we obtain that $g_k=b$ is equal to some $g'_j$. It cannot be $j=1$ (that would imply that $k=1$ and therefore $s_1$ is not a $b$-syllable), so $j=l$, and therefore $k_0=l$ and $g'_l=b$.

We established that $r''$ asynchronously $(H+2\max\{|b|,|f_0|,|f_1|\})$-fellow travels with a subpath of $\iota(p_1)$ labeled by $g_{1}^{m}g_{2}^{m_{2}}\cdots g_k^{m_k}$, $m\le m_1$, and subpath of $\iota(p_2)$ labeled by ${g'_1}^{m'_1}\cdots {g'_{l-1}}^{m'_{l-1}}{g'_{l}}^{m'}$, $m'\le m'_l$. Applying Proposition~\ref{factor lemma} to the corresponding labels via a similar argument again, we obtain that $(g_2,g_3,\ldots,g_k)$ is a subsequence of $(g'_1,g'_2,\ldots,g'_l)$, and that $(g'_1,\ldots,g'_{l-1})$ is a subsequence of $(g_1,\ldots,g_k)$. Taking into account that $g_1=g_k=g'_1=g'_l=b$ and none other terms are equal to $b$, it follows that $(g_1,\ldots,g_k)=(g'_1,\ldots,g'_l)$, therefore $s_1$ and $s_2$ have the same value of $b$-gap. This finishes proof of the claim.

\medskip
If the concatenation $q=q'q''$ is a reduced path, the claim above allows to assume that subpaths $q_i$ in the definition of $\widetilde{\delta_\omega}$ do not contain the common endpoint of $q',q''$, except, perhaps, $\omega=-2,-1,0,1,2$, or two other values, i.e. $\widetilde{\Delta_\omega}(q)=\widetilde{\Delta_\omega}(q')+\widetilde{\Delta_\omega}(q'')$ except for at most five values of $\omega$.

Taking possible reduction into account and applying the above claim twice, we get that $\widetilde{\Delta_\omega}(q)=\widetilde{\Delta_\omega}(q')+\widetilde{\Delta_\omega}(q'')$ except, perhaps, $\omega=-2,\ldots,2$, or four other values.
\end{proof}

%Then one can choose $M$ large enough to satisfy the following condition: if a path $q$ in $\mathcal K$ asynchronously $H$-fellow travels with a path $\iota(p)$, where $p$ is labeled by $b^M$, then the length of $q$ is at least $D_0$.

\begin{lemma}\label{le:tripod} Let $D_0\in\mathbb N$ be given. Let reduced paths $q_1,q_2,q_3$ in $\mathcal K$ be such that $q_i=s_it_is_{i+1}^{-1}$,  $i\in\{1,2,3\}$ ($i+1$ is taken$\mod 3$), where $s_i,t_i$ are reduced paths such that $|t_i|<D_0$. Then if $M$ in~\eqref{Rhem} is large enough, $\widetilde{\Delta_\omega}(q_1)+\widetilde{\Delta_\omega}(q_2)+\widetilde{\Delta_\omega}(q_3)=0$ for every $\omega\in\mathbb Z$ except, perhaps, at most $72$ values.
\end{lemma}
\begin{proof}
First of all, it follows by triangle inequality and definitions of quasigeodesity and fellow travel that one can choose $M$ large enough to satisfy the following condition: if a path $q$ in $\mathcal K$ asynchronously $H$-fellow travels with a path $\iota(p)$, where $p$ is labeled by $b^M$, then the length of $q$ is at least $D_0$.

Now note that with an appropriate choice of $M$, $\widetilde{\Delta_\omega}(q_i)=\widetilde{\Delta_\omega}(s_i)+\widetilde{\Delta_\omega}(t_i)-\widetilde{\Delta_\omega}(s_{i+1})$ by Lemma~\ref{le:product}, except for at most $18$ values  of $\omega$. Then by the same lemma, $\widetilde{\Delta_\omega}(q_1q_2q_3)=\widetilde{\Delta_\omega}(q_1)+\widetilde{\Delta_\omega}(q_2)+\widetilde{\Delta_\omega}(q_3)$ except for at most $18$ values of $\omega$. Combining the equalities, we get that $\widetilde{\Delta_\omega}(q_1q_2q_3)=\widetilde{\Delta_\omega}(t_1)+\widetilde{\Delta_\omega}(t_2)+\widetilde{\Delta_\omega}(t_3)=0$ since each  $|t_i|<D_0$, except for at most $18\cdot 4=72$ values of $\omega$.
%Let the concatenation $r\cdot r'$ of two reduced paths $r,r'$ in $\mathcal K$ be reduced. Note that  the endpoint $E$ of $r$ may be involved in fellow-travel with $b$- or $b^{-1}$-syllables with at most two distinct values of $b$- or $b^{-1}$-gaps. Indeed, inspect two subpaths $q_1, q_2$ of $r\cdot r'$ containing $E$ that asynchronously $H$-fellow travel with paths $\iota(p_1), \iota(p_2)$, respectively, where $p_1,p_2$ are big power paths in $\mathcal C$. Let path $q$ be the longest common part of $q_1,q_2$. For definiteness, assume $q_1=q_1'\cdot q$, $q_2=q\cdot q_2''$ and let $p'$ be a subpath of $p_2$
%
%
%by Proposition~\ref{factor lemma},
%
%, so $\widetilde{\Delta_\omega}(r\cdot r')=\widetilde{\Delta_\omega}(r)+\widetilde{\Delta_\omega}(r')$ except, perhaps, two values of $\omega$.
%
%Similarly, by Proposition~\ref{factor lemma}, if $M$ is large enough, each $t_i$ may be involved in fellow-travel with at most two $b$- or $b^{-1}$-syllables. Accounting for possible path reduction in the product $q_i=s_it_is_{i+1}^{-1}$, by the definition of $\widetilde{\Delta_\omega}$, we have $\widetilde{\Delta_\omega}(q_i)=\widetilde{\Delta_\omega}(s_i)-\widetilde{\Delta_\omega}(s_{i+1})$ for all integer $\omega$ except, perhaps, six values. The statement follows.
\end{proof}
%Note that by a slightly more careful inspection, the argument can be altered to exclude only $5+4\cdot 3$ values of $\omega$. Moreover, the restriction $|\omega|\ge 3$ in the proof of Lemma~\ref{le:product} can be avoided as well, making it $4\cdot 3$ values.

\begin{prop}\label{gamma}
Suppose $w\in F$ with $d=e(w)$. Let $G$ be a non-elementary hyperbolic group. Then $\exists M\in\mathbb Z$, $\exists N\in\mathbb Z$ such that if $g\in w[G]\cap R(b,f_0,f_1,M)$, then $\gamma_d(g)\le N$.
\end{prop}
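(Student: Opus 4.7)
The strategy I would follow is Rhemtulla's original argument for free groups, with Proposition~\ref{factor lemma} playing the role of the unique freely reduced form. Given $g\in w[G]\cap R(b,f_0,f_1,M)$ with $g=w(\bar h_1,\ldots,\bar h_k)$, I first apply Lemma~\ref{mod-d} to obtain a decomposition $g=\bar y_{i_1}^{\delta_1}\cdots \bar y_{i_n}^{\delta_n}$ in $G$, where $n$ is bounded solely in terms of $w$, $\delta_j=\pm1$, and each $y_\nu$ occurs with total signed multiplicity divisible by $d$.

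The geometric step aligns this $y$-decomposition with $g$'s big-powers decomposition. Let $\Pi$ be the big-powers $(\lambda,\varepsilon)$-quasigeodesic realising $g=\bar g_1^{m_1}\cdots\bar g_k^{m_k}$, let $P_j=\bar y_{i_1}^{\delta_1}\cdots\bar y_{i_j}^{\delta_j}$, and let $\sigma_j$ be a geodesic from $P_{j-1}$ to $P_j$. Applying Lemma~\ref{ngon} to the quasigeodesic $(n+1)$-gon $\sigma_1\cdots\sigma_n\Pi^{-1}$ yields points $Q_0=P_0,Q_1,\ldots,Q_{n-1},Q_n=P_n$, listed in order on $\Pi$, with $|P_jQ_j|\le H$ for some $H=H(\delta,\lambda,\varepsilon,n)$. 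These slice $\Pi$ into sub-paths $\Pi_j$; the element $\bar u_j=Q_{j-1}^{-1}Q_j$ satisfies $\bar y_{i_j}^{\delta_j}=\bar t_{j-1}\bar u_j\bar t_j^{-1}$ with $|\bar t_j|\le H$ and $\bar t_0=\bar t_n=1$, and inherits a big-powers expression as a block of consecutive factors of $g$'s big-powers form, possibly truncated at both ends. Choosing $M$ large enough that Proposition~\ref{factor lemma} applies with $K=H+\max\{|b|,|f_0|,|f_1|\}$ (a choice depending only on $w$), Proposition~\ref{factor lemma} then forces the interior factors of $\bar u_j$ and $\bar u_{j'}$ to coincide whenever $y_{i_j}=y_{i_{j'}}$ and $\delta_j=\delta_{j'}$, and to be mutually inverse whenever the signs oppose; call this common interior $U_\nu$ for $\nu=i_j$.

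The counting step then closes the argument. Every $b$-syllable of $g$'s big-powers product either lies entirely inside the interior of some unique $\bar u_j$, or it lies in or straddles one of the $n-1$ cut regions, and the latter class contributes to $\Delta_\omega(g)$ at only $O(n)$ values of $\omega$. For interior $b$-syllables grouped by $y_\nu$, the total contribution to $\Delta_\omega(g)$ is $\bigl(\sum_{j\,:\,y_{i_j}=y_\nu}\delta_j\bigr)\Delta_\omega(U_\nu)=dp_\nu\Delta_\omega(U_\nu)\equiv 0\pmod d$, using $\Delta_\omega(x^{-1})=-\Delta_\omega(x)$ to handle the opposite-sign case. Summing over $\nu$ shows $\Delta_\omega(g)\not\equiv 0\pmod d$ for at most $O(n)$ values of $\omega$, giving $\gamma(g)\le N$ with $N=N(w)$.

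The main obstacle will be the geometric alignment step: extracting the partial big-powers structure of each $\bar u_j$ so that its interior exponents still satisfy $m_i\ge M$ requires picking $M$ only \emph{after} $H$ has been determined from $n$, and relies crucially on Proposition~\ref{factor lemma}'s tolerance for small uncontrolled end tails. The bookkeeping separating ``interior'' from ``straddling'' syllables is delicate but routine once the alignment is in place.
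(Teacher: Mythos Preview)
Your order of operations is the reverse of the paper's, and this creates a genuine gap. You apply Lemma~\ref{mod-d} first, in the free group on $\mathcal A$, to the words $h_1,\ldots,h_k$, and then try to align the resulting $y$-decomposition geometrically with $\Pi$. The problematic step is the claim that Lemma~\ref{ngon}, applied to the $(n{+}1)$-gon $\sigma_1\cdots\sigma_n\Pi^{-1}$, yields points $Q_j$ on $\Pi$ with $|P_jQ_j|\le H$. Lemma~\ref{ngon} only says that $\Pi$ lies in an $H$-neighbourhood of $\bigcup_j\sigma_j$; it gives no control on where the \emph{vertices} $P_j$ sit relative to $\Pi$. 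The ``no cancellation'' clause in Lemma~\ref{mod-d} is a free-group condition on the words $y_{i_j}$ and does not prevent the broken geodesic $\sigma_1\cdots\sigma_n$ from backtracking badly in $G$. For a concrete failure, take $w=[x_1,x_2]$ and long geodesics $h_1,h_2$ that nearly commute in $G$: the vertex $P_1=\bar h_1$ can be arbitrarily far from any quasigeodesic joining $1$ to $g=[\bar h_1,\bar h_2]$. Once the $P_j$ stray from $\Pi$, your blocks $\bar u_j$ need not partition $\Pi$, Proposition~\ref{factor lemma} cannot be invoked as you describe, and the divisibility bookkeeping collapses.

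The paper avoids this by running the two lemmas in the opposite order. It first applies Lemma~\ref{ngon} to the polygon $p_g^{-1}w(h_1,\ldots,h_n)$ whose sides are the \emph{original} geodesics $h_i$ (one side per letter of $w$); this geometrically decomposes $p_g$ into segments $z_i$, each asynchronously fellow-travelling a subword of some $h_j$. Only then is Lemma~\ref{mod-d} invoked, but now on the fellow-travel diagram thus obtained, refining the $z_i$ into pieces $y_\nu$ whose repeated occurrences are, by construction, already aligned along $p_g$ up to bounded error. In short, the ``cancellation'' that matters here is geometric (fellow travel among the sides of the $w$-polygon), not the free-group cancellation that Lemma~\ref{mod-d} records on its own; your approach detects the latter and therefore cannot guarantee the alignment you need.
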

\begin{proof}
Let $w=w(x_1,\ldots,x_n)=x^{\epsilon_1}_{i_1}x^{\epsilon_2}_{i_2}\cdots x^{\epsilon_r}_{i_r}$, $\epsilon_i=\pm 1$.
Suppose $g=g_0\in G$ is such that $g_0=w(g_1,g_2,\ldots,g_n)$, $g_i\in G$. Convert this equality into a system of ``triangular'' equalities in a straightforward way:
\[\begin{array}{lcl}
g_0&=& g^{\epsilon_1}_{i_1}\cdot (g^{\epsilon_2}_{i_2}\cdots g^{\epsilon_r}_{i_r})\\
g^{\epsilon_2}_{i_2}\cdots g^{\epsilon_r}_{i_r}&=&
g^{\epsilon_2}_{i_2}\cdot( g^{\epsilon_3}_{i_3}\cdots g^{\epsilon_r}_{i_r})\\
&&\ldots\\
g^{\epsilon_{r-1}}_{i_{r-1}}g^{\epsilon_r}_{i_r}&=&
g^{\epsilon_{r-1}}_{i_{r-1}}\cdot g^{\epsilon_r}_{i_r}
\end{array}
\]
By Proposition~\ref{prop:reps}, %a result of Dahmani and Guirardel~\cite[Proposition 9.10]{DG},
there is a positive integer $D_0$ that depends only on the presentation of the group $G$ and the word $w$ such that this system admits a family of $D_0$-canonical representatives, %(see~\cite{Delz},~\cite{RS})
i.e. there are paths $p_0,p_1,\ldots,p_n$ in $\mathcal K$, each $p_i$ originating at $\iota(1)$ and terminating at $\iota(g_i)$, such that the tripods in the graph $\mathcal K$ corresponding to the equalities above satisfy the conditions of Lemma~\ref{le:tripod}. Choosing $M$ large enough and applying this Lemma repeatedly, we obtain that \[\widetilde{\Delta_\omega}(p_0)=\epsilon_1\widetilde{\Delta_\omega}(p_{i_1})+\epsilon_2\widetilde{\Delta_\omega}(p_{i_2})+\ldots+\epsilon_r\widetilde{\Delta_\omega}(p_{i_r})
\]
for all integer $\omega$ except, perhaps, $72(r+1)$ values. Note that in the equality above, each $p_i$ appears a number of times that is a multiple of $d$ (counting inverse occurrences as $-1$), so the latter sum is $0$ mod $d$, providing
\begin{equation}\label{eq:mod-d}
\widetilde{\Delta_\omega}(p_0)=0\mod d
\end{equation}
for all positive integer $\omega$, except, perhaps, $72(r+1)$ values.

Note further that we may assume (see Proposition~\ref{prop:reps}) %\cite[Proposition 9.10]{DG})
that the $D_0$-canonical representatives $p_i$ are $(\lambda',\varepsilon')$-quasigeodesic, where $\lambda',\varepsilon'$ only depend on the presentation of the group $G$ and the word $w$. Finally, observe that the element $g_0$ can be represented by a $(\lambda,\varepsilon)$-quasigeodesic word $p'$ that arises from~\eqref{Rhem}, so the paths $\iota(p')$ and $p_0$ in $\mathcal K$, corresponding to the group element $g_0$, asynchronously $H_0$-fellow travel, where $H_0$ ultimately depends only on the presentation of the group $G$ and the word $w$. Choosing $H=2H_0$ in the definition of $\widetilde{\Delta_\omega}$ (see~\eqref{eq:defn_delta_tilde} and~\eqref{eq:delta-hat}), by Lemma~\ref{le:delta_vs_tilde} we may choose $M$ large enough so that $\widetilde{\Delta_\omega}(p_0)=\Delta_\omega(p')$. Taking~\eqref{eq:mod-d} into account, we conclude $\gamma(g_0)\le {72r+72}$.
\end{proof}

\section{Proof of Theorem~\ref{width}}

For a word $w=w(x_1,\ldots,x_n)$, by $W^{(l)}\subset F$ we denote the following finite set of words in variables $x_1,\ldots, x_{nl}$:
\begin{align*}
W^{(l)}=&\{w^{\pm 1}(x_1,\ldots,x_n)w^{\pm 1}(x_{n+1},\ldots,x_{2n})\cdots\\
&\cdot w^{\pm 1}(x_{(l-1)n+1},\ldots,x_{(l-1)n+n})\}.
\end{align*}

Note that $w[G]^l=\cup_{u\in W^{(l)}}u[G]$, and $e(u)=e(w)$ for every $u\in W^{(l)}$.
For a fixed $l$, applying Proposition~\ref{gamma} to all words in $W^{(l)}$, we conclude that
for $g\in R\cap w[G]^l$, $\gamma(g)$ is bounded by a number $N$ that depends on $w$ and $l$. It is only left to note that
one can easily point out a sequence of elements $h_k$ in $R\cap  w(G)$ with $\gamma(h_k)\to\infty$.
(From now on, with an obvious abuse of notation, we don't keep track of difference between the words $b,f_0,f_1$ in $\mathcal A$ and the respective elements
$\ov b, \ov f_0,\ov f_1$ of $G$.)

{\bf Case 1.} Indeed, let $d=e(w)>0$. Denote
\[
X_j=f^M_1f^M_0 f^M_1\cdots  f^M_0 f^M_1\cdot  b^M\cdot  f^M_1 f^M_0\cdots  f_1^M f^M_0,
\]
where $f^M_0$ is repeated $j$ times on the left of $b^M$ and $j$ times on the right of it. Then put
\[
h_k=X_1^dX_2^d\cdots X_k^d.
\]
$ h_k\in  R\cap w(G)$ since it is a product of $d$-th powers. Note that $\Delta_\omega(h_k)=1$ for every
odd number between $3$ and $2k-1$ (and $\Delta_\omega(h_k)=-1 \mod d$ for every even number between $2$ and $2k$).
Therefore, $\gamma(h_k)\ge k-2\to\infty$.

{\bf Case 2.} Now, suppose $d=e(w)=0$. Renaming and/or inverting variables and passing to a conjugate,
we may assume $w(x_1,x_2,\ldots, x_n)=x_1\cdots x_2$.
Consider the following elements of the group $G$:
\[
X_{ij}= f_0^{inM+(j-1)M} f_1^M f_0^{inM+(j-1)M},\quad 1\le i <\infty,\ 1\le j\le n,
\]
and
\[
Y_j= b^{jM} f_1^Mb^{jM},\quad 1\le j\le n.
\]
Note that by Corollary~\ref{C1'}, $\{ X_{ij},Y_j\mid 1\le i <\infty,\ 1\le j\le n\} $ is an infinite free basis
for a free subgroup of $G$. Consider element $g_1\in G$ defined by
\[
g_1=w(X_{11}^{-1},X_{12},\ldots,X_{1n})=f_0^{\alpha_1}f_1^{\beta_1}f_0^{\alpha_2}\cdots f_1^{\beta_s}f_0^{\alpha_{s+1}}.
\]
Note that, since $w$ is a commutator word, $s$ is even, and therefore $s+1$ is odd, so the value
\[
\varepsilon(g_1)=\varepsilon_1+\cdots+\varepsilon_{s+1}
\]
is nonzero, where $\varepsilon_\mu=1$ if $\alpha_\mu>0$ and $\varepsilon_\mu=-1$ otherwise. Note that in any case $\alpha_1<0$
and $\alpha_{s+1}>0$. Put
\[
g_2=\left\{
      \begin{array}{ll}
        w(X_{21}^{-1},X_{22},\ldots,X_{2n})g_1, & \mathrm{if}\ \varepsilon(g_1)>0; \\
        g_1w(X_{21}^{-1},X_{22},\ldots,X_{2n}), & \mathrm{if}\ \varepsilon(g_1)<0.
      \end{array}
    \right.
\]
Note that $\varepsilon(w(X_{21}^{-1},X_{22},\ldots,X_{2n}))=\varepsilon(g_1)$, so $|\varepsilon(g_2)|\ge 2 |\varepsilon(g_1)|>|\varepsilon(g_1)|$. Proceed in the same fashion: if $i\ge 3$ and
\[
g_{i-1}=f_0^{\alpha'_1}f_1^{\beta'_1}f_0^{\alpha'_2}\cdots f_1^{\beta'_{s'}}f_0^{\alpha'_{s'+1}}
\]
with $\alpha'_1<0$, $\alpha'_{s'+1}>0$, we put
\[
g_{i}=\left\{
      \begin{array}{ll}
        w(X_{i1}^{-1},X_{i2},\ldots,X_{in})g_{i-1}, & \mathrm{if}\ \varepsilon(g_{i-1})>0; \\
        g_{i-1}w(X_{i1}^{-1},X_{i2},\ldots,X_{in}), & \mathrm{if}\ \varepsilon(g_{i-1})<0.
      \end{array}
    \right.
\]
Then the initial power of $f_0$ in $g_{i}$ is negative, the terminal one is positive, and
$\varepsilon(w(X_{i1}^{-1},X_{i2},\ldots,X_{in}))=\varepsilon(g_1)$, so $|\varepsilon(g_{i})|>|\varepsilon(g_{i-1})|$.

Finally, denote $w(Y_1,\ldots,Y_n)=B$ and consider elements of $G$ defined by
\[
h_k=Bg_1Bg_2B\ldots Bg_k B\in  w(G)\cap R.
\]
Note that $\Delta_{\varepsilon(g_i)}(h_k)=1$ for any $1\le i\le k$. Therefore, $\gamma(h_k)\ge k\to \infty$.

This finishes the proof of Theorem~\ref{width}.

\section{Appendix}\label{se:async}
Here we prove Lemma~\ref{fellow-equiv}.

\noindent \textbf{Lemma~\ref{fellow-equiv}}. \textit{Suppose $p, q$ are $K$-close $(\lambda,\varepsilon)$-quasigeodesics in a geodesic metric space $\mathcal H$ that originate from points $p_0,q_0$, respectively, such that $|p_0-q_0|\le K$, and terminate at points $p_1,q_1$, respectively, such that $|p_1-q_1|\le K$.
Then $p$ and $q$ asynchronously $K'$-fellow travel for some constant $K'=K'(\lambda, \varepsilon, K)$.}
\begin{proof}
Note that for given $\lambda,\varepsilon$ there is a function $D(r):\mathbb R_{\ge 0}\to \mathbb R_{\ge 0}$ with the following property: if $x_0\in\mathcal H$ and two points $\gamma(t'),\gamma(t'')$ of a $(\lambda,\varepsilon)$-quasigeodesic $\gamma$ in $\mathcal H$ are such that $|x_0-\gamma(t')|\le R$, $|x_0-\gamma(t'')|\le R$, then $|x_0-\gamma(t)|\le D(R)$ for every $t$ between $t'$, $t''$. Indeed, by quasigeodesity we have $|\gamma(t)-\gamma(t')|\le \lambda|t-t'|+\varepsilon\le \lambda|t'-t''|+\varepsilon\le \lambda\cdot \lambda(|\gamma(t')-\gamma(t'')|+\varepsilon)+\varepsilon\le 
\lambda^2(2R+\varepsilon)+\varepsilon,$ therefore $|x_0-\gamma(t)|\le |x_0-\gamma(t')|+|\gamma(t')-\gamma(t)|\le D(R)$, where $D(R)=R+\lambda^2(2R+\varepsilon)+\varepsilon$.

Assume $p:[0,T]\to \mathcal H$ and $q:[0,S]\to\mathcal H$. We will show that $p,q$ asynchronously $K'$-fellow travel with $K'=D(D(K+\lambda+\varepsilon))+\lambda+\varepsilon$ by finding a ``correspondence'' subset $\mathcal F\subseteq [0,T]\times [0,S]$ such that
\begin{enumerate}
\item[(a)] if $(t,s)\in\mathcal F$, then $|p(t)-q(s)|\le K'$, and
\item[(b)] if $(t,s),(t',s')\in\mathcal F$, then $t\le t'\iff s\le s'$,
\item[(c)] the projection of $\mathcal F$ onto the first coordinate is $[0,T]$, and the projection of $\mathcal F$ onto the second coordinate is $[0,S]$.
\end{enumerate}
Choose $0=t_0<t_1<\ldots<t_m=T$ in $[0,T]$ so that $|t_j-t_{j+1}|\le 1$. By $K$-closeness, for each $t_j$ there is a point $s_j\in [0,S]$ such that $|p(t_j)-q(s_j)|\le K$. Since $|p(0)-q(0)|, |p(T)-q(S)|\le K$, we may assume $s_0=0$ and $s_m=S$. We will alter the sequence $s_0,s_1,\ldots, s_m$, obtaining a {\em monotone} sequence $0=u_0\le u_1\le \ldots\le u_m=S$ as follows. Starting with $i=0$, we consequently check whether $s_{i+1}\ge s_i$.
\begin{itemize}
\item[\em Up] If the answer is positive, we set $u_{i+1}=s_{i+1}$, and move on to the next value of $i$. Note that in such case
\begin{equation}\label{eq:tkuk1}
|p(t_{i+1})-q(u_{i+1})|\le K.
\end{equation}
\item[\em Down] (Fig.~\ref{fi:down}) If the answer is negative, i.e. $s_{i+1}<s_i$, let $j$ be the least index $j>i$ such that $s_j\ge s_i$ (such index exists because $s_m=S\ge s_i$). Set $u_{i+1}=u_{i+2}=\ldots=u_{j-1}=s_i$, $u_j=s_j$, and move the process on to $i=j$. In this case, note the following.\\
-- The distance $|p(t_j)-q(s_{j-1})|\le |p(t_j)-p(t_{j-1})|+|p(t_{j-1})-q(s_{j-1})|\le \lambda\cdot |t_j-t_{j-1}|+\varepsilon +K\le \lambda+\varepsilon+K$. Put $x_0=p(t_j)$. We have $|x_0-q(s_{j-1})|,|x_0-q(s_j)|\le \lambda+\varepsilon+K$, so by the choice of the function $D$, the distance $|p(t_j)-q(u_{j-1})|=|p(t_j)-q(s_i)|\le D(\lambda+\varepsilon +K)$, since $s_i\in[s_{j-1},s_j]$ by the choice of~$j$.\\
-- Now put $x_0=q(s_i)$. By above, we have
$|p(t_i)-x_0|\le K\le D(\lambda+\varepsilon +K)$, and
\begin{equation}\label{eq:vert_ends}
|p(t_j)-x_0|=|p(t_j)-q(u_{j-1})|\le D(\lambda+\varepsilon +K),
\end{equation}
so by the choice of the function $D$,
\begin{equation}\label{eq:horizontal}
%\[
|p(t_k)-q(u_k)|\le D(D(\lambda+\varepsilon +K))\quad \mbox{for all }k=i+1,\ldots,j-1,
%\]
\end{equation}
and
\begin{equation}\label{eq:tkuk2}
|p(t_j)-q(u_j)|\le K.
\end{equation}
\begin{figure}[h]
 \includegraphics[height=1.6in]{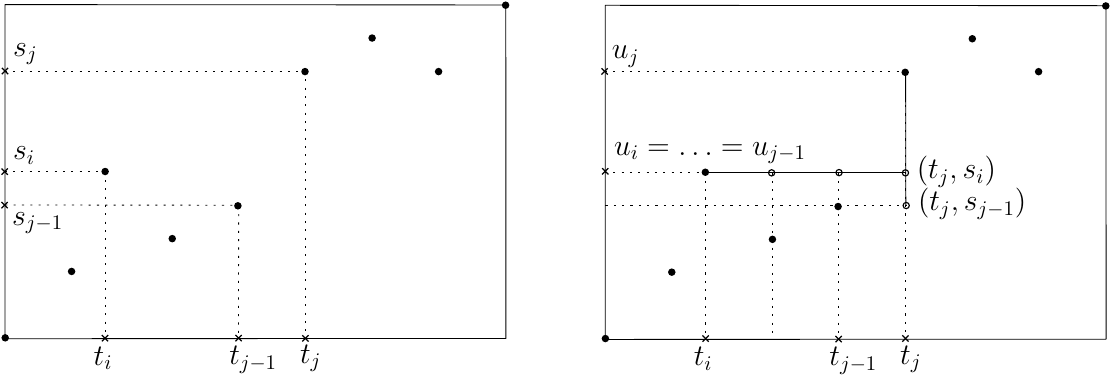}
 \caption{Case {\em Down} of constructing the monotone sequence $(u_i)$.}\label{fi:down}
\end{figure}
\end{itemize}
We proceed in such manner until we arrive at $u_m=s_m$. Given the monotone sequence $0=u_0\le u_1\le \ldots\le u_m=S$, we define $\mathcal F$ to be the polygonal line with successive vertices
\[
(t_0,u_0),(t_1,u_0),(t_1,u_1),(t_2,u_1),\ldots, (t_{m-1},u_{m-1}), (t_m,u_{m-1}), (t_m,u_m),
\]
as shown in Fig.~\ref{fi:async}.
\begin{figure}[h]
 \includegraphics[height=1.6in]{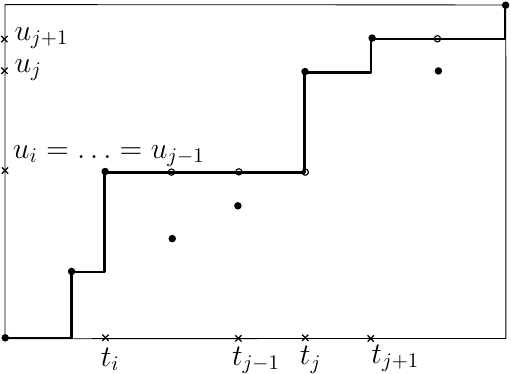}
 \caption{The set $\mathcal F$.}\label{fi:async}
\end{figure}
Since the sequence $(u_i)_{i=0,\ldots,m}$ is monotone, the condition (b) is satisfied. Let us check now the condition (a).
Combining~\eqref{eq:tkuk1},\eqref{eq:horizontal},\eqref{eq:tkuk2}, we get
%\begin{equation}
\[
|p(t_k)-q(u_k)|\le D(D(\lambda+\varepsilon +K))\quad \mbox{for all } k=0,1,\ldots,m.
\]
%\end{equation}
Therefore, for each horizontal segment $[t_k,t_{k+1}]\times u_k$, we have \begin{equation}\label{eq:hor_bound}
|p(t)-q(u_k)|\le |p(t)-p(t_k)|+|p(t_k)-q(u_k)|\le \lambda+\varepsilon+D(D(\lambda+\varepsilon +K))
\end{equation} for all $t\in[t_k,t_{k+1}]$ by~\eqref{eq:horizontal}. For each vertical segment of nonzero hight $t_k\times[u_{k-1},u_k]$ we have $|p(t_k)-q(u_k)|\le K$ and, depending on whether $u_{k-1}$ comes from {\em Up} or {\em Down} case, $|p(t_k)-q(u_{k-1})|\le K+\lambda+\varepsilon$ or $|p(t_k)-q(u_{k-1})|\le D(\lambda+\varepsilon +K)$ by~\eqref{eq:vert_ends}. By the choice of the function $D$,
\begin{equation}\label{eq:vert_bound}
|p(t_k)-q(u)|\le D(D(\lambda+\varepsilon +K))
\end{equation}
for all $u\in [u_{k-1},u_k]$. Putting~\eqref{eq:hor_bound} and~\eqref{eq:vert_bound} together, we conclude that the condition (a) holds with $K'=D(D(\lambda+\varepsilon +K))+\lambda+\varepsilon$.

Finally, the condition (c) holds since $(t_0,u_0)=(0,0)$ and $(t_m,u_m)=(T,S)$.
\end{proof}


\begin{thebibliography}{99}
\bibitem{Bar1} V. Bardakov,  `On width of verbal subgroups of certain free constructions', Algebra and Logika, Vol. 36 (1997), No.5, 288--300.

\bibitem{Bar2} V. Bardakov, `On the theory of braid groups', Mat. Sb., Volume 183 (1992), Number 6, Pages 3--42.

\bibitem{Bel} O. Belegradek, `The model theory of unitriangular groups', Annals of Pure and Applied Logic, Volume 68 (1994), Issue 3, 225--261.

\bibitem{BH} M. Bridson, A. Haefliger, `Metric Spaces of Non-Positive Curvature', Springer-Verlag, Berlin (1999).

\bibitem{Cal1}  D. Calegari,  `SCL', Math. Soc. Japan  Mem., vol. 20 (2009), Tokyo.

\bibitem{Cal2}  D. Calegari,  `Quasimorphisms and laws', Algebr. Geom. Topol., 10 (2010), 215--217.

\bibitem{CFP} J. W. Cannon, W. J. Floyd, and W. R. Parry, `Introductory notes on Richard Thompson's groups', Enseign. Math. (2) 42 (1996), no. 3-4, 215--256.

\bibitem{DG} F. Dahmani, V. Guirardel, `Foliations for solving equations in groups: free, virtually free, and hyperbolic groups' J. Topol. 3(2), 2010, 343--04.

%\bibitem{Delz} T. Delzant. `Finitely presented subgroups of hyperbolic groups', preprint.\an{Need to look this up}

\bibitem{Dob} I. V. Dobrynina,  `On the width in free products with amalgamation', Math. Notes, Volume 68 (2000), Number 3, 306--311.

\bibitem{Eps} D. B. A. Epstein et all, `Word Processing in Groups', Jones and Bartlett, Boston, London (1992).

\bibitem{Ersh} Yu. Ershov,  `Elementary group theories' (Russian), Dokl. Akad. Nauk SSSR (1972), 1240-1243;  English translation in Soviet Math. Dokl. 13 (1972), 528--532.

\bibitem{Fuji} K. Fujiwara, `The second bounded cohomology of an amalgamated free product of groups', Trans. Amer. Math. Soc. 352 (2000), 1113--1129.

\bibitem{Ger}  S. Gersten,  `Cohomological lower bounds for isoperimetric functions on groups', Topology,  37 (1998),  1031--1072.

%\bibitem{GH} \'{E}. Ghys and P. de la Harpe (eds.), `Sur les groupes hyperboliques d'apr\`{e}s Mikhael Gromov',
%Birkh\"{a}ser Boston Inc., Boston, MA, 1990, Papers from the Swiss Seminar on Hyperbolic
%Groups held in Bern, 1988.

\bibitem{Gr} R.I. Grigorchuk, `Bounded cohomology of group constructions', Mat. Zametki, Volume 59 (1996), Issue 4, 546--550.

\bibitem{Gr1}  M. Gromov, `Volume and bounded cohomology', IHES Publ. Math., 56 (1982), 5--99.

\bibitem{Gr3}  M. Gromov,  `Hyperbolic groups',  S.M. Gersten (ed.), Essays in Group Theory, MSRI Publ., 8 (1987), Springer, 75-263.

\bibitem{Gr2}  M. Gromov, `Asymptotic invariants of infinite groups', London Math. Soc.  Lect. Notes Ser., vol. 182 (1993), Cambridge.

\bibitem{H1} P. Hall, `Verbal and marginal subgroups', Journal f\"{u}r die reine und angewandte Mathematik (Crelles Journal). Volume 1940 (1940),
Issue 182, 156--157.

\bibitem{H2} P. Hall, 'Nilpotent groups', (Lectures given at the Canadien Math. Congress, University of Alberta, 1957) Queen Mary College Math. Notes, 1969.

\bibitem{LS}  M. Larsen  and   A. Shalev,  `Word maps and Waring type problems', J. Amer. Math. Soc. 22 (2009),  437--466.

\bibitem{LBST} M. Liebeck,  E. O'Brian,  A. Shalev  and   P. Tiep, 'The Ore conjecture', J. European Math. Soc., 12 (2010),  939--1008.

\bibitem{LMU} I. Lysenok, A. Miasnikov, A. Ushakov, 'Quadratic equations in the Grigorchuk group', 2013, arXiv:1304.5579 [math.GR].



%\bibitem{MKS}  W. Magnus,  A. Karrass, and  D. Solitar,  `Combinatorial Group Theory', Wiley Interscience, New York (1968).

\bibitem{Mak} G.S. Makanin,  `Equations in a free group' (Russian), Izvestia Akademii Nauk SSSR, Ser. Matemat.,  46 (1982), 1199--1273.

\bibitem{Mal}  A.I. Mal'cev, `On a certain correspondencece between rings and groups', (Russian) Mat. Sobronik 50 (1960) 257-266; English translation in A. I. Mal'cev, `The Metamathematics of Algebraic Systems, Collected papers: 1936-1967', Studies in logic and Foundations of Math. Vol. 66, North-Holland Publishing Company, (1971).

\bibitem{Mer} Yu.I. Merzlyakov, `Algebraic linear groups as full groups of automorphisms and the closure of
their verbal subgroups', Algebra i Logika Sem., 6(1) (1967), 83--94. (Russian; English summary).

\bibitem{M1}  A.G. Myasnikov, `Model--theoretic problems of algebra', Doctoral Dissertation, VINITI, Moscow (1992).

\bibitem{M2}  A.G. Myasnikov, `Elementary theories and abstract isomorphisms of finite-dimensional algebras and unipotent groups', Dokl. Akad. Nauk SSSR, v.297 (1987), no. 2, 290--293.

\bibitem{M3} A.G. Myasnikov, `The structure of models and a criterion for the decidability of complete theories of finite-dimensional algebras', (Russian) Izv. Akad. Nauk SSSR Ser. Mat. 53 (1989), no. 2, 379--397; English translation in Math. USSR-Izv. 34 (1990), no. 2, 389--407.


\bibitem{MR1} A.G. Myasnikov and V.N. Remeslennikov, `Classification of nilpotent power groups by their elementary properties', Trudy Inst. Math. Sibirsk Otdel. Akad. Nauk SSS v. 2R (1982), 56--87.
\bibitem{MR2} A.G. Myasnikov and V.N. Remeslennikov, `Definability of the set of Mal'cev bases and elementary theories of finite-dimensional algebras I', Sibirsk. Math. Zh., v. 23 (1982), no. 5, 152--167. English transl., Siberian Math. J., v. 23 (1983), 711--724.

\bibitem{MR3} A.G. Myasnikov and V.N. Remeslennikov, `Definability of the set of Mal'cev bases and elementary theories of finite-dimensional algebras II', Sibirsk. Math. Zh., v. 24 (1983), no. 2, 97--113. English transl., Siberian Math. J., v. 24 (1983), 231--246.

\bibitem{MRom} A. Myasnikov, V. Roman'kov, `On rationality of verbal subsets in a group', arXiv:1103.4817v1 (preprint 2011).

\bibitem{MS}  A.G. Myasnikov and M. Sohrabi, `Groups elementarily equivalent to a free 2-nilpotent group of finite rank', Algebra and Logic
Volume 48 (2009), Number 2, 115--139.

\bibitem{Neumann}  H. Neumann, `Varieties of groups', Springer-Verlag, New York (1967).

\bibitem{NS} N.Nikolov and D.Segal, `On finitely generated profinite groups I: strong completeness and uniform bounds', Annals of Math., 165 (2007), 171--238.

\bibitem{Nikolov2012} N.Nikolov, 'Algebraic properties of profinite groups', arXiv:1108.5130v6 (preprint 2011).

\bibitem{Oger} F. Oger,  `Cancellation and elementary equivalence of finitely generated finite-by-nilpotent
groups', J. London Math. Society (2) 44 (1991) 173--183.

\bibitem{Ore}  O. Ore, `Some remarks on commutators', Proc. Amer. Math. Soc. 2 (1951), 307--314.


\bibitem{Ol} A. Ol'shansky, `On residualing homomorphisms and $G$-subgroups of hyperbolic groups', Intern. J. of Algebra and Comput., 3 (1993), 1--44.

\bibitem{Osi07} D. Osin, `Peripheral fillings of relatively hyperbolic groups', Invent.\ Math. 167 (2007), 295--326.



\bibitem{Rhe}   A.H. Rhemtulla, `A problem of bounded expressability in free products', Proc. Cambridge Philos. Soc., 64 (1968), 573--584.

%\bibitem{Rhecomm} A.H. Rhemtulla,  `Commutators of certain finitely generated soluble groups',  Canad. J. Math., 21 (1969), 1160--1164.

\bibitem{RS} E. Rips, Z. Sela, `Canonical representatives and equations in hyperbolic groups', Invent.
Math., 120: 489--512, 1995.% MR 96c:20053

\bibitem{Romankov}  V.A. Roman'kov,`Width of verbal subgroups in solvable groups', Algebra and Log., 21 (1982), 41--49.

\bibitem{Segal}  D. Segal,  `Words: notes on verbal width in groups', London Math. Soc. Lect. Notes Ser., vol. 361 (2009), Cambridge Univ. Press, Cambridge.

\bibitem{Serre} J-P. Serre, `Galois cohomolgy', Springer, Berlin, 1997.

\bibitem{Shalev}  A. Shalev, `Word maps, conjugacy classes, and a noncommutative Waring-type theorem', Ann. of Math.,  170 (2009), 1383--1416.

\bibitem{Stroud} P.W. Stroud, `Topics in the theory of verbal subgroups',  PhD Thesis, Univ. of Cambridge, Cambridge (1966).


\end{thebibliography}
\end{document}